\title{Hypergraph Ramsey numbers}
\author{David Conlon\thanks{St John's College, Cambridge, United Kingdom.
E-mail: {\tt D.Conlon@dpmms.cam.ac.uk}. Research supported by a
Junior Research Fellowship at St John's College, Cambridge.} \and
Jacob Fox\thanks{Department of Mathematics, Princeton, Princeton,
NJ. Email: {\tt jacobfox@math.princeton.edu}. Research supported
by an NSF Graduate Research Fellowship and a Princeton Centennial
Fellowship.} \and Benny Sudakov\thanks{Department of Mathematics,
UCLA,  Los Angeles, CA 90095. Email: {\tt bsudakov@math.ucla.edu}.
Research supported in part by NSF CAREER award DMS-0546523 and by
USA-Israeli BSF grant.}}
\newenvironment{proof}
      {\medskip\noindent{\bf Proof.}\hspace{1mm}}
      {\hfill$\Box$\medskip}
\def\qed{\ifvmode\mbox{ }\else\unskip\fi\hskip 1em plus 10fill$\Box$}
\newtheorem{theorem}{Theorem}[section]
\newtheorem{lemma}[theorem]{Lemma}
\newtheorem{proposition}[theorem]{Proposition}
\newtheorem{corollary}[theorem]{Corollary}
\newtheorem{conjecture}[theorem]{Conjecture}
\newtheorem{question}[theorem]{Question}
\begin{document}
\date{}
\maketitle

\begin{abstract}
The Ramsey number $r_k(s,n)$ is the minimum $N$ such that every
red-blue coloring of the $k$-tuples of an $N$-element set contains
either a red set of size $s$ or a blue set of size $n$, where a
set is called red (blue) if all $k$-tuples from this set are red
(blue). In this paper we obtain new estimates for several basic
hypergraph Ramsey problems. We give a new upper bound for
$r_k(s,n)$ for $k \geq 3$ and $s$ fixed. In particular, we show
that $$r_3(s,n) \leq 2^{n^{s-2}\log n},$$ which improves by a
factor of $n^{s-2}/\textrm{polylog}\,n$ the exponent of the
previous upper bound of Erd\H{o}s and Rado from 1952. We also
obtain a new lower bound for these numbers, showing that there are
constants $c_1,c_2>0$
 such that $$r_3(s,n) \geq 2^{c_1 \, sn \, \log (n/s)}$$ for all $4 \leq s
\leq c_2n$. When $s$ is a constant, it gives the first
superexponential lower bound for $r_3(s,n)$, answering an open
question posed by Erd\H{o}s and Hajnal in 1972. Next, we consider
the $3$-color Ramsey number $r_3(n,n,n)$, which is the minimum $N$
such that every $3$-coloring of the triples of an $N$-element set
contains a monochromatic set of size $n$. Improving another old
result of Erd\H{o}s and Hajnal, we show that $$r_3(n,n,n) \geq
2^{n^{c \log n}}.$$ Finally, we make some progress on related
hypergraph Ramsey-type problems.
\end{abstract}

\section{Introduction}
Ramsey theory refers to a large body of deep results in mathematics
whose underlying philosophy is captured succinctly by the statement that ``Every large system,
contains a large well organized subsystem.'' This is an area in which a great
variety of techniques from many branches of mathematics are used and
whose results are important not only to combinatorics but also to logic, analysis, number theory, and
geometry. Since the publication of the seminal paper of Ramsey
in 1930, this subject experienced tremendous growth,
and is currently among the most active areas in combinatorics.

The {\it Ramsey number} $r(s,n)$ is the least integer $N$ such
that every red-blue coloring of the edges of the complete graph
$K_N$ on $N$ vertices contains either a red $K_s$ (i.e., a
complete subgraph all of whose edges are colored red) or a blue
$K_n$. Ramsey's theorem states that $r(s,n)$ exists for all $s$
and $n$. Determining or estimating Ramsey numbers is one of the
central problem in combinatorics, see the book {\it Ramsey theory}
\cite{GRS90} for details. A classical result of Erd\H{o}s and
Szekeres~\cite{ES35}, which is a quantitative version of Ramsey's
theorem, implies that $r(n,n) \leq 2^{2n}$ for every positive
integer $n$. Erd\H{o}s~\cite{E47} showed using probabilistic
arguments that $r(n,n)
> 2^{n/2}$ for $n
> 2$. Over the last sixty years, there have been several
improvements on these bounds (see, e.g., \cite{C08}). However,
despite efforts by various researchers, the constant factors in
the above exponents remain the same.

Off-diagonal Ramsey numbers, i.e. $r(s,n)$ with $s \not = n$, have
also been intensely studied. For example, after several successive
improvements, it is known (see \cite{AKS80}, \cite{K95},
\cite{S77}) that there are constants $c_1, \ldots, c_4$ such that
$$c_1\frac{n^2}{\log n} \leq r(3,n) \leq c_2 \frac{n^2}{\log n},$$
and for fixed $s>3$,
\begin{equation}
\label{eq2} c_3\left(\frac{n}{\log n} \right)^{(s+1)/2} \leq
r(s,n) \leq c_4\frac{n^{s-1}}{\log^{s-2} n},
\end{equation}
(For $s=4$, Bohman \cite{B08} recently improved the lower bound by
a factor of $\log^{1/2} n$.) All logarithms in this paper are base
$e$ unless otherwise stated.

Although already for graph Ramsey numbers there are significant gaps
between lower and upper bounds, our knowledge of hypergraph Ramsey
numbers is even weaker. The Ramsey number $r_k(s,n)$ is the minimum
$N$ such that every red-blue coloring of the unordered $k$-tuples of
an $N$-element set contains either a red set of size $s$ or a blue
set of size $n$, where a set is called red (blue) if all $k$-tuples
from this set are red (blue). Erd\H{o}s, Hajnal, and Rado
\cite{EHR65} showed that there are positive constants $c$ and $c'$
such that
$$2^{cn^2}<r_3(n,n)<2^{2^{c'n}}.$$ They also conjectured that
$r_3(n,n)>2^{2^{cn}}$ for some constant $c>0$ and Erd\H{o}s offered
a \$500 reward for a proof. Similarly, for $k \geq 4$, there is a
difference of one exponential between known upper and lower bounds
for $r_k(n,n)$, i.e., $$t_{k-1}(cn^2) \leq r_k(n,n) \leq t_k(c'n),$$
where the tower function $t_k(x)$ is defined by $t_1(x)=x$ and
$t_{i+1}(x)=2^{t_i(x)}$.

The study of $3$-uniform hypergraphs is particularly important for
our understanding of hypergraph Ramsey numbers. This is because of
an ingenious construction called the stepping-up lemma due to
Erd\H{o}s and Hajnal (see, e.g., Chapter 4.7 in \cite{GRS90}).
Their method allows one to construct lower bound colorings for
uniformity $k+1$ from colorings for uniformity $k$, effectively
gaining an extra exponential each time it is applied.
Unfortunately, the smallest $k$ for which it works is $k=3$.
Therefore, proving that $r_3(n,n)$ has doubly exponential growth
will allow one to close the gap between the upper and lower bounds
for $r_k(n,n)$ for all uniformities $k$. There is some evidence
that the growth rate of $r_3(n,n)$ is closer to the upper bound,
namely, that with four colors instead of two this is known to be
true. Erd\H{o}s and Hajnal (see, e.g., \cite{GRS90}) constructed a
$4$-coloring of the triples of a set of size $2^{2^{cn}}$ which
does not contain a monochromatic subset of size $n$. This result
shows that the number of colors matters a lot in this problem and
leads to the question of what happens in the intermediate case
when we use three colors. The $3$-color Ramsey number $r_3(n,n,n)$
is the minimum $N$ such that every $3$-coloring of the triples of
an $N$-element set contains a monochromatic set of size $n$.
Naturally, for $r_3(n,n,n)$, one should expect at least some
improvement on the $2^{cn^2}$ lower bound. Indeed, Erd\H{o}s and
Hajnal provided such a result (see \cite{EH89} and \cite{CG98}),
showing that $r_3(n,n,n) \geq 2^{c n^2 \log^2 n}$. Here, we
substantially improve this bound, extending the above mentioned
stepping-up lemma of these two authors to show

\begin{theorem} \label{threecolour}
There exists a constant $c$ such that
\begin{equation} \label{erdoshajnal}
r_3(n,n,n) \geq 2^{n^{c \log n}}.
\end{equation}
\end{theorem}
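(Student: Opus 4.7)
The plan is to extend the Erd\H{o}s--Hajnal stepping-up lemma, in combination with the off-diagonal Ramsey bound~\eqref{eq2}, to construct a $3$-coloring of the triples of a very large vertex set with no monochromatic subset of size $n$. Fix a parameter $s = c_1 \log n$; by \eqref{eq2} there is a $2$-coloring $\chi$ of the edges of $K_N$ with no red $K_s$ and no blue $K_n$, where
$$N \geq (n/\log n)^{(s+1)/2} = n^{\Omega(\log n)}.$$
I will produce a $3$-coloring $\chi'$ of the triples of $V = \{0,1\}^N$ with no monochromatic set of size $n$; since $|V| = 2^N \geq 2^{n^{c \log n}}$, this yields \eqref{erdoshajnal}.

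For binary strings $a < b < c$ in lexicographic order, set $\delta_1 = \delta(a,b)$ and $\delta_2 = \delta(b,c)$, where $\delta(x,y)$ is the position of the first coordinate on which $x$ and $y$ differ. A standard property is that $\delta_1 \neq \delta_2$, and for any chain $x_1 < \cdots < x_m$ with $d_i = \delta(x_i,x_{i+1})$ one has $\delta(x_i,x_j) = \min_{i \leq l < j} d_l$. The coloring $\chi'$ assigns a color in $\{1,2,3\}$ to each triple based on the pair $(\chi(\{\delta_1,\delta_2\}), \mathrm{sgn}(\delta_2 - \delta_1))$, partitioning the four resulting cases into three colors in a way tailored so that any monochromatic $S \subseteq V$ forces a monochromatic clique in $\chi$ of controlled size.

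The heart of the argument is to analyze a monochromatic $S = \{x_1 < \cdots < x_m\}$ in $\chi'$. In the ``pure'' cases, where all triples of $S$ satisfy $\delta_1 < \delta_2$ (or all satisfy $\delta_1 > \delta_2$), the sequence $(d_i)$ is strictly monotone, and the identity $\delta(x_i,x_j) = \min_{[i,j)} d_l$ forces $\chi$ to be monochromatic on all pairs from $\{d_1,\dots,d_{m-1}\}$; depending on the case, this produces either a red clique (bounded by $s = c_1 \log n$) or a blue clique (bounded by $n$) in $K_N$. The main obstacle is the ``mixed'' color, where the triples of $S$ have both types and the $d$-sequence is not monotone. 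Here the plan is to apply an Erd\H{o}s--Szekeres style argument on the $d$-sequence to extract a sub-sub-sequence $T \subseteq S$ whose induced $\delta$-sequence is strictly monotone, reducing to the pure-type analysis. The subtlety, which is what makes this the hard step, is that the induced $\delta$-sequence of a sub-sub-sequence is a sequence of minima of $d$ over consecutive intervals rather than a sub-sequence of $d$ itself; one must therefore choose $T$ so that these interval minima form a monotone sequence of length large enough to give the required clique in $\chi$. The right choice (for example, taking $T$ along successive left-to-right or right-to-left minima of $d$) will ensure that even in the mixed color every monochromatic set of size $n$ forces a red $K_s$ or blue $K_n$ in $\chi$.

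Combining the three bounds, $\chi'$ has no monochromatic set of size $n$, whence $r_3(n,n,n) > |V| = 2^N \geq 2^{n^{c\log n}}$ for an appropriate constant $c$, as claimed.
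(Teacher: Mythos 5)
Your framework is the same as the paper's: a stepping-up coloring of the triples of $\{0,1\}^N$ with $N$ roughly $r(c_1\log n,\,n)$, where one color of the auxiliary graph coloring $\chi$ is split into two classes according to whether $\delta_1<\delta_2$ or $\delta_1>\delta_2$ and the other color has its two orientations merged, and your treatment of the two pure classes is exactly the paper's argument. (One point you leave unspecified but which is forced: the merged class must be the \emph{red}, i.e.\ sparse, side of $\chi$, since the mixed-class analysis can never yield more than a logarithmic-size clique.) The genuine gap is in the mixed class, precisely the step you flag as hard. Your proposed extraction --- take $T$ along the successive left-to-right (or right-to-left) minima of $d_1,\dots,d_{m-1}$ --- does give a monotone induced $\delta$-sequence (the block minima between consecutive prefix minima are the prefix minima themselves), but it comes with no lower bound on its length: for a realizable delta sequence such as $d=(2,5,6,7,\dots,k,1)$ the left-to-right minima are only $\{2,1\}$ and the right-to-left minima only $\{1\}$, so both choices of $T$ have bounded size and force no clique of size $\Omega(\log n)$ in $\chi$. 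Thus the key lemma your plan rests on (every monochromatic $S$ of size $n$ in the mixed class contains $T$ with monotone induced $\delta$'s of size $\Omega(\log n)$) is left unproven, and the construction you suggest for it fails.

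What the paper does instead is a recursion that never needs positional monotonicity: by properties (a) and (b) of $\delta$, the extremal $\delta$-value of any interval of the chain is attained at a unique index and forms a $\chi$-red pair (an edge of $\bar G$ in the paper's notation) with \emph{every} other $\delta$-value of that interval; one then recurses into the larger of the two subintervals flanking that index, losing at most a factor of $2$ per step, and after about $\log_2 n$ steps one has $\log_2 n$ pairwise red $\delta$-values, contradicting the absence of a red clique of size $\log_2 n$ in $\chi$. Your monotone-extraction statement can in fact be rescued by the same halving idea (recursing into the heavier side shows that either the increasing or the decreasing type admits a subset of size at least about $\tfrac12\log_2 m$, since appending a point beyond the global minimum extends a decreasing pattern from the left part and prepending one extends an increasing pattern from the right part), but that recursion is exactly the missing content and is the heart of the proof; as written, your argument for the mixed color does not go through. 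The remaining issues are minor: the bound (\ref{eq2}) is stated for fixed $s$, so for $s=c_1\log n$ you should instead quote the probabilistic bound $r(s,n)\ge\left(\frac{n+s}{s}\right)^{cs}$ used in the paper, and to exclude monochromatic sets of size exactly $n$ in the pure classes you should start from a coloring with no blue $K_{n-1}$ rather than no blue $K_n$.
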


For off-diagonal Ramsey numbers, a classical argument of Erd\H{o}s
and Rado \cite{ER52} from 1952 demonstrates that
\begin{equation}\label{erdosrado} r_k(s,n) \leq 2^{{r_{k-1}(s-1,n-1) \choose
k-1}}.\end{equation} Together with the upper bound in (\ref{eq2})
it gives for fixed $s$ that $r_3(s,n) \leq 2^{{r_2(s-1,n-1)
\choose 2}} \leq 2^{c\frac{n^{2s-4}}{\log^{2s-6} n}}$. Our next
result improves the exponent of this upper bound by a factor of
$n^{s-2}/\textrm{polylog}\,n$.
\begin{theorem} \label{upperbound}
For fixed $s \geq 4$ and sufficiently large $n$,
\begin{equation}\label{mainlowerbound} \log r_3(s,n) \leq
\Big(\frac{(s-3)}{(s-2)!}+o(1)\Big)n^{s-2} \log n.\end{equation}
\end{theorem}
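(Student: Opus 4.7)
The plan is to refine the classical Erdős--Rado argument. Recall that Erdős--Rado sequentially picks vertices $v_1,\ldots,v_M$ with $M=r_2(s-1,n-1)$ and a nested sequence of working sets $V=W_0\supseteq W_1\supseteq\cdots\supseteq W_M$, refining at each step $i$ according to the $(i-1)$-bit link-color vector $(\chi(\{v_j,v_i,w\}))_{j<i}$ so that the color of $\{v_j,v_i,w\}$ depends only on $(j,i)$ for $w\in W_i$. This refinement costs a factor of $2^{i-1}$ per step, for a total loss of $2^{\binom{M}{2}}$; combined with the endgame (apply $r_2(s-1,n-1)$ to the induced pair-coloring on $\{v_1,\ldots,v_M\}$ and append any $w\in W_M$) this yields $r_3(s,n)\le 2^{\binom{r_2(s-1,n-1)}{2}}$, far weaker than the target.

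Rewriting the target $\tfrac{s-3}{(s-2)!}n^{s-2}\log n$ via the Erdős--Szekeres bound $r_2(s-1,n)\le\binom{n+s-3}{s-2}$ as $(s-3)\,r_2(s-1,n-1)\log n + o(\cdot)$, I aim to reduce the per-step refinement cost from $2^{i-1}$ down to $n^{O(1)}$. Over $M$ steps this gives a total cost of $n^{O(M)} = 2^{O(M\log n)} = 2^{O(n^{s-2}\log n / (s-2)!)}$, matching the target up to the constant. The heart of the argument is therefore the cheaper refinement: at step $i$, pick $v_i$ and $W_i$ so that one only needs to distinguish polynomially many link-color patterns per step. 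This inevitably weakens the Erdős--Rado stability condition --- for example by requiring it only for pairs $(j,i)$ with $j$ in a cleverly chosen subset $S_i\subseteq\{1,\ldots,i-1\}$ of size $O(\log n)$, or only on an average/majority basis --- and the trick is to keep it strong enough for the endgame to go through.

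Once the refinement lemma is in place, the bookkeeping is routine: run the modified procedure for $M=r_2(s-1,n-1)\le\binom{n+s-3}{s-2}$ steps to ensure $W_M\ne\emptyset$ whenever $N\ge 2^{(\frac{s-3}{(s-2)!}+o(1))n^{s-2}\log n}$. One then exhibits a ``clique'' $T\subseteq[M]$ of size $r_2(s-1,n-1)$ on which the partial pair-coloring is fully defined (e.g.\ because $\{v_j:j<i, j\in T\}\subseteq S_i$ for all $i\in T$), applies $r_2(s-1,n-1)$ to get a red $K_{s-1}$ or blue $K_{n-1}$ inside $T$, and appends any $w\in W_M$ to obtain the monochromatic $K^{(3)}_s$ or $K^{(3)}_n$.

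The main obstacle is the cheaper refinement. Since the Erdős--Rado refinement is tight if one insists on the full stability condition, a genuine weakening of the invariant is needed, and the endgame requirement forces the weakening to be done so that a large stable clique $T$ still exists. I expect the execution to involve either an induction on $s$, in which the previously-proved bound on $r_3(s-1,\cdot)$ (or on $r_2(s-1,\cdot)$) is invoked inside the link of $v_i$ to produce the required substructure for only $n^{O(1)}$ refinement cost, or an Erdős--Szekeres-style compression of the $(i-1)$-bit link vectors. Pinning down the leading constant $(s-3)/(s-2)!$ exactly --- as opposed to some $O_s(1)$ --- will then require a careful balance of the parameters governing $|S_i|$ and the sizes of the nested working sets.
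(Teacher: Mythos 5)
Your high-level instinct --- do not stabilize all pairs, and make each new vertex cost only polynomially much --- is indeed one of the two ideas behind the actual proof, but your proposal leaves the mechanism (the ``cheaper refinement'') as an acknowledged obstacle, and the endgame you sketch is incompatible with it. If you insist on a set $T$ of size $r_2(s-1,n-1)\approx n^{s-2}$ on which the partial pair-coloring is fully defined, then the last vertex of $T$ alone needs $|T|-1\gg\log n$ colored back-edges (contradicting $|S_i|=O(\log n)$), and in total $\binom{|T|}{2}\approx n^{2s-4}$ pairs must be colored; under any scheme that loses a factor bounded away from $1$ per colored pair this reinstates exactly the Erd\H{o}s--Rado loss $2^{\Theta(n^{2s-4})}$. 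The paper never produces a fully colored clique: it plays the vertex on-line Ramsey game, where an adaptive builder strategy (an Erd\H{o}s--Szekeres-type tree of labels in $\{R,B\}^{\le s+n-6}$, so your ``compression'' guess points in the right direction) draws at most $s+n-6$ back-edges per new vertex and about $(s+n-6)\binom{s+n-4}{s-2}$ edges in total, and forces a red $K_{s-1}$ or blue $K_{n-1}$ \emph{directly inside this sparse partial coloring}; that clique is then capped by any surviving vertex of the working set.

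The second, equally essential ingredient is missing from your plan altogether: the asymmetric threshold. Even granting a perfect sparsification, majority refinement (a factor-$2$ loss per colored pair) cannot reach the claimed bound: a blue $K_{n-1}$ in $\chi'$ requires all $\binom{n-1}{2}$ of its pairs to be stabilized, so for $s=4$ the loss is at least $2^{cn^2}$ against a target of $2^{O(n\log n)}$, and with the builder strategy above and $\alpha=1/2$ one only gets exponent $\Theta_s(n^{s-1})$, not $n^{s-2}\log n$. The paper instead colors a pair red only when at least an $\alpha$-fraction of its extensions is red, with $\alpha\approx s/n$: blue pairs then cost a factor $(1-\alpha)^{-1}$ each (negligible in aggregate), red pairs cost $\alpha^{-1}\approx n/s$, and the builder strategy guarantees at most $s-3$ red back-edges per vertex, hence only about $(s-3)\binom{s+n-4}{s-2}$ red pairs overall --- which is precisely where the constant $\frac{s-3}{(s-2)!}$ comes from. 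Without this red/blue asymmetry and the few-red-edges guarantee, your scheme cannot achieve the stated order of the exponent, let alone the constant, so the proposal as written has a genuine gap.
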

Clearly, a similar improvement for off-diagonal Ramsey numbers of
higher uniformity follows from this result together with
(\ref{erdosrado}).

Erd\H{o}s and Hajnal \cite{EH72} showed that $\log r_3(4,n) > cn$
using the following simple construction. They consider a random
tournament on $[N]= \{1, \ldots, N\}$ and color the triples from
$[N]$ red if they form a cyclic triangle and blue otherwise. Since
it is well known and easy to show that every tournament on four
vertices contains at most two cyclic triangles and a random
tournament on $N$ vertices with high probability does not contain
a transitive subtournament of size $c'\log N$, the resulting
coloring neither has a red set of size $4$ nor a blue set of size
$c'\log N$. In the same paper from 1972, they suggested that
probably $\frac{\log r_3(4,n)}{n} \to \infty$. Here we prove the
following new lower bound which implies this conjecture.

\begin{theorem} \label{lowerbound}
There are constants $c_1,c_2>0$ such that $$\log r_3(s,n) \geq c_1
\, sn \, \log (n/s)$$ for all $4 \leq s \leq
 c_2n$.
\end{theorem}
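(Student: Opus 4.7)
The plan is to establish the lower bound by a probabilistic construction that refines the random-tournament argument of Erd\H{o}s and Hajnal underlying the estimate $r_3(4,n) > 2^{cn}$. Set $N = 2^{c_1 sn \log(n/s)}$ and construct a random $2$-coloring of the triples of $[N]$ so that, with positive probability, it has no red $s$-set and no blue $n$-set.

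Recall the baseline Erd\H{o}s--Hajnal construction: take a uniformly random tournament $T$ on $[N]$ and declare a triple red if it is cyclic in $T$ and blue if it is transitive. Red $s$-sets for every $s \ge 4$ are ruled out deterministically since every tournament on $4$ vertices has at least two transitive triples, while blue $n$-sets are ruled out up to $N \le 2^{cn}$ because the largest transitive subtournament of a random $T$ on $N$ vertices has size $O(\log N)$. To go beyond $2^{cn}$ to $2^{c_1 sn \log(n/s)}$, the blue condition must be strengthened so as to introduce an additional multiplicative penalty of roughly $q^{-\Theta(sn^2)}$ in the first-moment bound, with $q$ of order $n/s$, while keeping red $s$-sets forbidden. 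The plan is to superimpose on $T$ auxiliary independent random structure on the pairs---concretely, an independent uniform label in $[q]$ on each pair, replicated $\Theta(s)$ times---and to modify the coloring rule so that ``blue'' now requires both a transitive $n$-subtournament of $T$ and a highly constrained pattern of labels on its source--sink pairs in each of the replications.

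The first-moment computation then gives an expected number of blue $n$-sets at most $\binom{N}{n} \cdot \frac{n!}{2^{\binom{n}{2}}} \cdot q^{-\Theta(sn^2)}$, which is $o(1)$ once $\log N \lesssim sn \log q$; taking $q$ of order $n/s$ yields $\log N \lesssim sn \log(n/s)$ as required. The main obstacle, and where most of the work will be concentrated, is the design of the new coloring rule itself: the auxiliary label structure must be chosen so that the deterministic obstruction to red $s$-sets survives the enrichment---one must argue combinatorially that, for any $s$-clique, the transitive triples cannot jointly fail the blue condition across all $\Theta(s)$ replications without contradicting the tournament structure---while simultaneously producing the $q^{-\Theta(sn^2)}$ saving in the first-moment estimate for blue $n$-sets. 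Balancing the alphabet size $q$ and the replication count against the Erd\H{o}s--Hajnal four-vertex obstruction is the technical heart of the proof; once this is in place, a standard first-moment union bound together with an alteration/deletion argument completes the construction.
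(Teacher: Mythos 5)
Your proposal is a plan rather than a proof: the object on which everything hinges --- the precise coloring rule combining the tournament with the auxiliary labels, together with the combinatorial argument that no $s$-set can have all its triples red under that rule --- is exactly what you leave undesigned (you yourself call it ``the technical heart of the proof''). Without it neither the deterministic red obstruction nor the first-moment bound is actually established: the quantity $\binom{N}{n}\cdot n!\,2^{-\binom{n}{2}}\cdot q^{-\Theta(sn^2)}$ presupposes a blue event you have not defined, and the claim that the four-vertex obstruction ``survives the enrichment'' is precisely the step most likely to fail. If blue is strengthened to mean ``transitive \emph{and} label-consistent,'' then the red class is enlarged to include transitive triples with bad labels, and the Erd\H{o}s--Hajnal fact that every $4$-vertex tournament has at least two transitive triples no longer forbids red $4$-sets, let alone red $s$-sets; no replacement argument is given. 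There is also a structural mismatch: the tournament obstruction forbids red $4$-sets, which is far stronger than what is needed for general $s$ and only buys $\log N\gtrsim n$. To gain the factor $s$ in the exponent one must \emph{weaken} the red condition as $s$ grows (so that forbidding red $s$-sets, rather than $4$-sets, is what is exploited), not merely strengthen the blue condition while hoping the old obstruction persists.

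For comparison, the paper does not use tournaments here at all. It fixes a red-blue coloring $c_1$ of the complete graph on $r=r(s-1,n/4)-1$ vertices with no red $K_{s-1}$ and no blue $K_{n/4}$, takes a uniformly random map $c_2:\binom{[N]}{2}\to[r]$ with $N=r^{n/24}$, and colors a triple $a<b<c$ by $c_1\bigl(c_2(a,b),c_2(a,c)\bigr)$, declaring it blue when the two labels coincide. A red $s$-set $\{u_1<\cdots<u_s\}$ forces the labels $c_2(u_1,u_j)$ to be distinct and to span a red $K_{s-1}$ in $c_1$, which is impossible, so red $s$-sets are excluded deterministically \emph{because} $c_1$ has no red $K_{s-1}$ --- this is where the dependence on $s$ enters. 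A blue $n$-set forces, for each vertex $v_i$, all labels $c_2(v_i,v_j)$ with $j>i$ into a set of fewer than $n/4$ colors, giving a first-moment factor of roughly $(\ell/r)^{\binom{n}{2}}$ with $\ell=n/4$, which defeats the $N^n$ term since $r\geq\bigl(\frac{n+s}{s}\bigr)^{cs}$; this yields $\log r_3(s,n)\gtrsim sn\log(n/s)$. If you wish to salvage your outline you would essentially have to reinvent this structure: the missing ``label pattern'' is a graph Ramsey coloring on the label set, and the missing red-obstruction argument is the pigeonhole on the labels at the least vertex, neither of which is supplied by the random tournament with replicated labels as described.
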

Combining this result together with the stepping-up lemma of
Erd\H{o}s and Hajnal (see \cite{GRS90}), one can also obtain
analogous improvements of lower bounds for off-diagonal Ramsey
numbers for complete $k$-uniform hypergraphs with $k \geq 4$.

In view of our unsatisfactory knowledge of the growth rate of
hypergraph Ramsey numbers, Erd\H{o}s and Hajnal \cite{EH72}
started the investigation of the following more general problem.
Fix positive integers $k$, $s$, and $t$. What is the smallest $N$
such that every red-blue coloring of the $k$-tuples of an
$N$-element set
 has either a red set of size $n$ or has a set of size $s$
which contains at least $t$ blue $k$-tuples? Note that when $t={s
\choose k}$ the answer to this question is simply $r_k(n,s)$.

Let ${X \choose k}$ denote the collection of all $k$-element
subsets of the set $X$. Define $f_k(N,s,t)$ to be the largest $n$
for which every red-blue coloring of ${[N] \choose k}$ has a red
$n$-element set or a set of size $s$ which contains at least $t$
blue $k$-tuples. Erd\H{o}s and Hajnal \cite{EH72} in 1972
conjectured that as $t$ increases from $1$ to ${s \choose k}$,
$f_k(N,s,t)$ grows first like a power of $N$, then at a
well-defined value $t=h_1^{(k)}(s)$, $f_k(N,s,t)$ grows like a
power of $\log N$, i.e., $f_k(N,s,h_1^{(k)}(s)-1)>N^{c_1}$ but
$f_k(N,s,h_1^{(k)}(s))<(\log N)^{c_2}$. Then, as $t$ increases
further, at $h_2^{(k)}(s)$ the function $f_k(N,s,t)$ grows like a
power of $\log \log N$ etc. and finally $f_k\left(N,s,t\right)$
grows like a power of $\log_{(k-2)} N$ for $h_{k-2}^{(k)}(s) \leq
t \leq {s \choose k}$. Here $\log_{(i)} N$ is the $i$-fold
iterated logarithm of $N$, which is defined by $\log_{(1)} N =
\log N$ and $\log_{(j+1)} N = \log (\log_{(j)} N)$.

This problem of Erd\H{o}s and Hajnal is still widely open. In
\cite{EH72} they started a careful investigation of $h_1^{(3)}(s)$
and made several conjectures which would determine this function.
We make progress on their conjectures, computing $h_1^{(3)}(s)$
for infinitely many values of $s$. We also approximate
$h_1^{(3)}(s)$ for all $s$.

In the next section, we prove Theorem \ref{upperbound} which gives
a new upper bound on off-diagonal hypergraph Ramsey numbers. Our
lower bound on $r_3(s,n)$ appears in Section 3. In Section
\ref{sectionthreecolor}, we study the $3$-color hypergraph Ramsey
numbers and prove the lower bound for $r_3(n,n,n)$. In Section
\ref{sectionnext}, confirming a conjecture of Erd\H{o}s and
Hajnal, we determine the function $h_1^{(3)}(s)$ for infinitely
many values of $s$. Finally, in the last section of the paper, we
make several additional remarks on related hypergraph Ramsey
problems. Throughout the paper, we systematically omit floor and
ceiling signs whenever they are not crucial for the sake of
clarity of presentation. We also do not make any serious attempt
to optimize absolute constants in our statements and proofs.

\section{An Upper Bound for $r_3(s,n)$}
\label{offdiagonalsection} In this section we prove the upper
bound (\ref{mainlowerbound}) on off-diagonal hypergraph Ramsey
numbers.

First we briefly discuss a classical approach to this problem by
Erd\H{o}s-Rado and indicate where it can be improved. To prove
$\log_2 r_3(s,n) \leq {r(s-1,n-1) \choose 2}$, given a red-blue
coloring $\chi$ of the triples from $[N]$, Erd\H{o}s and Rado
greedily construct a set of vertices
$\{v_1,\ldots,v_{r(s-1,n-1)+1}\}$ such that for any given pair $1
\leq i <j \leq r(s-1,n-1)$, all triples $\{v_i,v_j,v_k\}$ with $k>j$
are of the same color, which we denote by $\chi'(v_i,v_j)$. By
definition of the Ramsey number, there is either a red clique of
size $s-1$ or a blue clique of size $n-1$ in coloring $\chi'$, and
this clique together with $v_{r(s-1,n-1)+1}$ forms a red set of size
$s$ or a blue set of size $n$ in coloring $\chi$. The greedy
construction of the set $\{v_1,\ldots,v_{r(s-1,n-1)+1}\}$ is as
follows. First, pick an arbitrary vertex $v_1$ and set
$S_1=S\setminus \{v_1\}$. After having picked $\{v_1,\ldots,v_i\}$
we also have a subset $S_i$ such that for any pair $a,b$ with $1
\leq a <b \leq i$, all triples $\{v_a,v_b,w\}$ with $w \in S_i$ are
the same color. Let $v_{i+1}$ be an arbitrary vertex in $S_i$ and
set $S_{i,0}=S_i \setminus \{v_{i+1}\}$. Suppose we already
constructed $S_{i,j} \subset S_{i,0}$ such that, for every $h \leq
j$ and $w \in S_{i,j}$, all triples $\{v_h,v_{i+1},w\}$ have the
same color. If the number of edges $(v_{j+1},v_{i+1},w)$ with $w \in
S_{i,j}$ that are red is at least $|S_{i,0}|/2$, then we let
$$S_{i,j+1}=\{w:(v_{j+1},v_{i+1},w)~\textrm{is red and}~w \in
S_{i,j}\}$$ and set $\chi'(i+1,j+1)=\textrm{red}$, otherwise we let
$$S_{i,j+1}=\{w:(v_{j+1},v_{i+1},w)~\textrm{is blue and}~w \in
S_{i,j}\}$$  and set $\chi'(i+1,j+1)=\textrm{blue}$. Finally, we let
$S_{i+1}=S_{i,i}$. Notice that $\{v_1,\ldots,v_{i+1}\}$ and
$S_{i+1}$
 have the desired properties to continue the greedy algorithm. Also, for each
edge $(v_{i+1},v_{j+1})$ that we color by $\chi'$, the set $S_{i,j}$
is at most halved. So we lose a factor of at most two for each of
the ${r(s-1,n-1) \choose 2}$ edges colored by $\chi'$.\footnote{We
also lose one element from $S_i$ when we pick $v_{i+1}$, but this 
loss is rather insubstantial.}

There are two ways we are able to improve on the Erd\H{o}s-Rado
approach. Our first improvement comes from utilizing the fact that
we do not need to ensure that for every pair $i<j$, all edges
$\{v_i,v_j,v_k\}$ with $k>j$ are of the same color. That is, the
coloring $\chi'$ will not necessarily color every pair. Furthermore,
the number of edges we color by $\chi'$ will be much smaller than
the best known estimate for ${r(s-1,n-1) \choose 2}$, and this is
how we will be able to get a smaller upper bound on $r_3(s,n)$. This
idea is nicely captured using the vertex on-line Ramsey number which
we next define. Consider the following game, played by two players,
builder and painter: at step $i+1$ a new vertex $v_{i+1}$ is
revealed; then, for every existing vertex $v_j$, $j = 1, \cdots, i$,
builder decides, in order, whether to draw the edge $v_j v_{i+1}$;
if he does expose such an edge, painter has to color it either red
or blue immediately. The {\it vertex on-line Ramsey number}
$\tilde{r}(k,l)$ is then defined as the minimum number of edges that
builder has to draw in order to force painter to create either a red
$K_k$ or a blue $K_l$. In Lemma \ref{gamelemma}, we provide an upper
bound on $\tilde{r}(s-1,n-1)$ which is much smaller than the best
known estimate on ${r(s-1,n-1) \choose 2}$. Since we are losing a
factor of at most two for every exposed edge, this immediately
improves on the Erd\H{o}s-Rado bound for $r_3(s,n)$.

A further improvement can be made by using the observation that
there will not be many pairs $i<j$ for which all triples
$\{v_i,v_j,v_k\}$ with $k>j$ are red. That is, we will be able to
show that there are not many red edges in the coloring $\chi'$ we
construct. Let $0<\alpha \ll 1/2$. Suppose we have
$\{v_1,\ldots,v_i\}$ and a set $S$ and, for a given $j<i$, we want
to find a subset $S' \subset S$ such that all triples
$\{v_j,v_i,w\}$ with $w \in S'$ are the same color. We pick
$S'=\{w:\{v_j,v_i,w\}~\textrm{is red and}~w \in S\}$ if the number
of triples $\{v_j,v_i,w\}$ with $w \in S$ is at least $\alpha |S|$
and blue otherwise. While the size of $S$ decreases now
by a much larger factor for each red edge in $\chi'$, there are
not many red edges in $\chi'$, on the other hand, we lose very little,
specifically a factor $(1-\alpha)$, for each blue edge in $\chi'$.
By picking $\alpha$ appropriately, we gain significantly over
taking $\alpha=1/2$ for our upper bound on off-diagonal hypergraph
Ramsey numbers.

Before we proceed with the proof of our upper bound on $r_3(s,n)$,
we want to discuss some other Ramsey-type numbers related to our vertex
on-line Ramsey game. One variant of Ramsey numbers which was
extensively studied in the literature (\cite{FS02}) is the {\it
size Ramsey number} $\hat r(G_1,G_2)$, which is the minimum number
of edges of a graph whose every red-blue edge-coloring contains either a
red $G_1$ or a blue $G_2$. Clearly, $\tilde{r}(k,\ell) \leq
\hat{r}(K_k,K_l)$ since builder can choose to pick the edges of a
graph which gives the size Ramsey number for $(K_k,K_l)$.
Unfortunately, it is not difficult to show that
$\hat{r}(K_k,K_l)={r(k,l) \choose 2}$ and therefore we cannot
obtain any improvement using these numbers. Another on-line Ramsey
game which is quite close to ours, was studied in \cite{KR05}. In
this game, there are two players, builder and painter, who move on
the originally empty graph with an unbounded number of vertices.
At each step, builder draws a new edge and painter has to color it
either red or blue immediately. The {\it edge on-line Ramsey
number} $\bar{r}(k,l)$ is then defined as the minimum number of
edges that builder has to draw in order to force painter to create
either a red $K_k$ or a blue $K_l$. A randomized version of the
edge on-line Ramsey game was studied in \cite{FKRRT03}. The
authors of \cite{KR05} proved an upper bound for $\bar{r}(k,l)$
which is similar to our Lemma \ref{gamelemma}. A careful reading
of their paper shows that builder first exposes edges from the
first vertex to all future vertices, and so on. Thus, this builder
strategy cannot be implemented when proving upper bounds for
hypergraph Ramsey numbers. Moreover, it is not clear how to use
the edge on-line Ramsey game to get an improvement on hypergraph
Ramsey numbers. Lemma \ref{gamelemma} is therefore essential for
our proof giving new upper bounds for hypergraph Ramsey numbers.

Using the ideas discussed above, we next prove an upper bound on
$r_3(s,n)$ which involves some parameters of the vertex on-line Ramsey
game.

\begin{theorem}\label{thmoffab}
Suppose in the vertex on-line Ramsey game that builder has a
strategy which ensures a red $K_{s-1}$ or a blue $K_{n-1}$ using
at most $v$ vertices, $r$ red edges, and in total $m$ edges. Then,
for any $0 <\alpha \leq 1/2$, the Ramsey number $r_3(s,n)$
satisfies
\begin{equation} \label{eq}r_3(s,n) \leq (v+1)\alpha^{-r}(1-\alpha)^{r-m}.
\end{equation}
\end{theorem}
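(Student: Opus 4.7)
The plan is to run the hybrid construction sketched just before the theorem: given any red-blue coloring $\chi$ of the triples of $[N]$ with $N=(v+1)\alpha^{-r}(1-\alpha)^{r-m}$, I will simulate the on-line Ramsey game, using $\chi$ to define painter's responses and maintaining a shrinking ``target'' set $S$ inside which every $\chi'$-colored pair behaves monochromatically. When builder's strategy terminates with a red $K_{s-1}$ or blue $K_{n-1}$ in the simulated coloring $\chi'$, any surviving vertex of $S$ will extend it to a red $K_s$ or blue $K_n$ in $\chi$, giving $r_3(s,n)\le N$.

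Concretely, I would pick $v_1\in[N]$ arbitrarily and set $S_1=[N]\setminus\{v_1\}$, and inductively maintain the invariant that after step $i$ there are vertices $v_1,\ldots,v_i$, a partial coloring $\chi'$ defined on exactly the pairs builder has exposed, and a set $S_i$ (disjoint from $\{v_1,\ldots,v_i\}$) such that for every exposed pair $(v_j,v_k)$ with $j<k\le i$ and every $w\in S_i$ the triple $\{v_j,v_k,w\}$ has color $\chi'(v_j,v_k)$. At step $i+1$, pick $v_{i+1}\in S_i$, let $S:=S_i\setminus\{v_{i+1}\}$, and process the edges $(v_j,v_{i+1})$ in the order builder exposes them. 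For each such edge, set $S_{\mathrm{red}}=\{w\in S:\{v_j,v_{i+1},w\}\text{ red}\}$; if $|S_{\mathrm{red}}|\ge\alpha|S|$, color the edge red in $\chi'$ and replace $S$ by $S_{\mathrm{red}}$ (losing a factor $\le\alpha$), otherwise color it blue and replace $S$ by $S\setminus S_{\mathrm{red}}$ (losing a factor $\le 1-\alpha$). Set $S_{i+1}=S$, and let painter answer according to $\chi'$ in the simulated game. The invariant persists because $S_{i+1}\subseteq S_i$ takes care of old pairs while the definition of $S_{i+1}$ handles the newly colored ones.

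The quantitative heart of the argument is the inductive bound
\[
|S_i|\;\ge\;\alpha^{r_i}(1-\alpha)^{m_i-r_i}\,N\;-\;i,
\]
where $r_i$ and $m_i$ denote the numbers of red and total edges exposed through step $i$. The multiplicative factor records the per-edge shrinkage, while the additive $-i$ absorbs the one vertex removed at each step; crucially, since $\alpha\le 1-\alpha\le 1$ every subsequent multiplier is at most $1$, so each unit removed contributes at most one unit to the final loss. Builder terminates after at most $v$ steps with a monochromatic clique in $\chi'$, and the hypothesis $\alpha\le 1/2$ gives the monotonicity $\alpha^{r_v}(1-\alpha)^{m_v-r_v}\ge\alpha^r(1-\alpha)^{m-r}$ whenever $r_v\le r$ and $r_v+(m_v-r_v)\le m$, so $|S_v|\ge(v+1)-v=1$. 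Any $w\in S_v$ appended to the $\chi'$-clique then yields the desired monochromatic $K_s$ or $K_n$ in $\chi$ by the invariant. The main place that needs care is the additive bookkeeping for the per-step vertex removals; the condition $\alpha\le 1/2$ is exactly what prevents these unit losses from being amplified by later multiplicative shrinkings, and this is the origin of the $(v+1)$ (rather than $v$) in the final bound.
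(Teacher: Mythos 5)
Your proposal is correct and follows essentially the same argument as the paper: simulate the builder's strategy, let painter color each exposed pair red or blue according to the $\alpha$-threshold on triples into the surviving set, and track the multiplicative shrinkage plus the one-vertex-per-step additive loss, using $\alpha\le 1/2$ (via $\alpha^{r_v-r}(1-\alpha)^{(m_v-r_v)-(m-r)}\ge((1-\alpha)/\alpha)^{r-r_v}\ge 1$) to conclude $|S_v|\ge(v+1)-v\ge 1$. The only cosmetic difference is bookkeeping: the paper absorbs the per-step unit losses into a factor $(v+1-a)$ inside the induction, while you carry them as an explicit additive $-i$; the two are equivalent.
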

\begin{proof}
Let $N=(v+1)\alpha^{-r}(1-\alpha)^{r-m}$ and consider a red-blue
coloring $\chi$ of the triples of the set $[N]$. We wish to show
that the coloring $\chi$ must contain a red set of size $s$ or a
blue set of size $n$.

We greedily construct a set of vertices $\{v_1, \cdots, v_h\}$ and
a graph $\Gamma$ on these vertices with at most $v$ vertices, at
most $r$ red edges, and at most $m$  total edges across them such
that for any edge $e = v_i v_j$, $i<j$ in $\Gamma$, the color of any 3-edge
$\{v_i, v_j, v_k\}$ with $k
> j$ is the same, say $\chi'(e)$. Moreover, this graph will contain
either a red $K_{s-1}$ or a blue $K_{n-1}$, which one can easily
see will define a red set of size $s$ or a
blue set of size $n$..

We begin the construction of this set of vertices by arbitrarily
first choosing vertices $v_1 \in [N]$ and setting $S_1=[N]
\setminus \{v_1\}$. Given a set of vertices $\{v_1, \cdots,
v_a\}$, we have a set $S_a$ such that for each edge $e = v_i v_j$ of
$\Gamma$ with $i, j \leq a$, the color of the
3-edge $\{v_i, v_j, w\}$ is the same for every $w$ in $S_a$.

Now let $v_{a+1}$ be a vertex in $S_a$. We play the vertex on-line
Ramsey game, so that builder chooses the edges to be drawn
according to his strategy. Painter then colors these edges. For
the first edge $e_1$ chosen, painter looks at all triples
containing this edge and a vertex from $S_a \setminus
\{v_{a+1}\}$. The 2-edge is colored red in $\chi'$ if it there are
more than $\alpha(|S_a|-1)$ such triples that are red and blue
otherwise. This defines a new subset $S_{a,1}$, which are all
vertices in $S_a \setminus \{v_{a+1}\}$ such that together with
edge $e_1$ form a triple of color $\chi'(e_1)$. For the next drawn
edge $e_2$ we color it red if there more than
$\alpha|S_{a,1}|$ red triples containing it and a vertex from
$S_{a,1}$ and blue otherwise. This will define an $S_{a,2}$ and so
forth. After we have added all edges from $v_{a+1}$, the remaining set will be
$S_{a+1}$. Let $m_a$ be the number of edges $e=v_iv_a$ with $i<a$
in $\Gamma$ and $r_a$ be the number of such edges that are red.

We now show by induction that $$|S_a| \geq
(v+1-a)\alpha^{-r+\sum_{i=1}^a r_i}(1-\alpha)^{r-m+\sum_{i=1}^a
m_i-r_i}.$$ For the base case $a=1$, we have
$$|S_1|=N-1=(v+1)\alpha^{-r}(1-\alpha)^{r-m}-1 \geq
v\alpha^{-r}(1-\alpha)^{r-m}.$$ Suppose we have proved the desired
inequality for $a$. When we draw a vertex $v_{a+1}$, the size
of our set $S_a$ decreases by $1$. Each time we
 draw an edge from $v_{a+1}$ we have the size of our set $S$ goes down by a factor $\alpha$ or
 $1-\alpha$. Therefore, \begin{eqnarray*} |S_{a+1}| & \geq &
 \alpha^{r_i}(1-\alpha)^{m_i-r_i}(|S_a|-1) \geq
\alpha^{r_i}(1-\alpha)^{m_i-r_i}|S_a|-1 \\ & \geq & (v+1-a)
 \alpha^{-r+\sum_{i=1}^{a+1} r_i}(1-\alpha)^{r-m+\sum_{i=1}^{a+1}
m_i-r_i}-1 \\ & \geq & ((v+1)-(a+1))\alpha^{-r+\sum_{i=1}^{a+1}
r_i}(1-\alpha)^{r-m+\sum_{i=1}^{a+1} m_i-r_i}.\end{eqnarray*}

By our assumption on the vertex on-line Ramsey game, by the time we will construct graph
$\Gamma$ containing either a read $K_{s-1}$ or a blue $K_{n-1}$, this graph will have
$a \leq v$ vertices, at most $r$ red edges, and at most $m$ total edges.
Therefore at this time we have
$$|S_a| \geq (v+1-a) \alpha^{-r+\sum_{i=1}^a
r_i}(1-\alpha)^{r-m+\sum_{i=1}^a m_i-r_i} \geq 1,$$
i.e.,  $S_a$ is
not empty. Thus a vertex from $S_a$ together with the red $K_{s-1}$
or blue $K_{n-1}$ in edge-coloring $\chi'$ of $\Gamma$ make either
a red set of size $s$ or a blue set of size $n$ in coloring
$\chi$, completing the proof.
\end{proof}

\begin{lemma}\label{gamelemma}
In the vertex on-line Ramsey game builder has a strategy which
ensures a red $K_{s}$ or a blue $K_{n}$ using at most
$\binom{s+n-2}{s-1}$ vertices, $(s-2)\binom{s+n-2}{s-1}+1$ red
edges, and $(s+n-4)\binom{s+n-2}{s-1}+1$ total edges. In
particular,
\[\tilde{r}(s,n) \leq (s+n-4) \binom{s+n-2}{s-1}+1.\]
\end{lemma}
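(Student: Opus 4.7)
The plan is to prove the lemma by induction on $s+n$, exhibiting a recursive builder strategy $\mathcal{S}(s,n)$ that parallels the Erd\H{o}s--Szekeres proof of $r(s,n)\le\binom{s+n-2}{s-1}$. Builder first reveals a root vertex $v_1$. For each subsequent vertex $w$ revealed, builder begins by drawing the edge $v_1w$; painter's colour sorts $w$ into the set $R$ of red neighbours of $v_1$ or the set $B$ of blue neighbours. Builder then runs the recursive sub-strategies $G_R=\mathcal{S}(s-1,n)$ on $R$ and $G_B=\mathcal{S}(s,n-1)$ on $B$ in parallel: whenever a new vertex joins one of these two sets, the corresponding sub-strategy dictates which further edges from it to the other members of the same set are to be exposed. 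The main game stops the moment one of the sub-games produces its target, which---after combining with $v_1$ if necessary---yields either a red $K_s$ or a blue $K_n$. Since at termination exactly one of the sub-games is finished and the other is incomplete with strictly fewer vertices than its own terminal count, the vertex count obeys the Pascal recursion $V(s,n)\le V(s-1,n)+V(s,n-1)=\binom{s+n-2}{s-1}$.

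To obtain the sharp ``$+1$'' in the red- and total-edge bounds I would strengthen the inductive hypothesis so that it holds at \emph{every} intermediate stage: after $V'$ vertices have been revealed in $\mathcal{S}(s,n)$, the running red-edge count is at most $(s-2)V'+1$ and the running total-edge count is at most $(s+n-4)V'+1$. Applying this IH at such a stage with $|R|+|B|=V'-1$, the accumulated red edges decompose as $|R|$ red edges from $v_1$ to $R$, at most $(s-3)|R|+1$ red edges inside $G_R$, and at most $(s-2)|B|+1$ red edges inside $G_B$. Summing gives $(s-2)(V'-1)+2=(s-2)V'-(s-4)$, which is at most $(s-2)V'+1$ precisely when $s\ge3$. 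A parallel computation yields the total-edge bound when $s+n\ge5$. The base cases $s\le 2$ and $n\le 2$ are handled directly; for example, $\mathcal{S}(2,n)$ terminates the instant the first red edge is drawn, because $G_R=\mathcal{S}(1,n)$ succeeds as soon as $|R|=1$.

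The main obstacle is exactly this ``$+1$ vs.\ $+2$'' accounting issue. A black-box induction using only terminal bounds on each sub-game would accumulate one unit of slack from each of $G_R$ and $G_B$ and so yield $+2$; the strengthened intermediate hypothesis cancels one of these units by exploiting that at termination of the main game the losing sub-game is necessarily incomplete, so the IH bounds its edge count using a strictly smaller vertex parameter. Closing the bookkeeping at the stated constant, together with the small base cases that are not covered by the general inductive step, is the delicate part of the proof.
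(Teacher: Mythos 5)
Your recursive strategy is the paper's strategy in disguise: unrolling the recursion, each new vertex is joined precisely to the earlier vertices along its red/blue decision path, which is exactly the Erd\H{o}s--Szekeres-type labelled tree that the paper's builder constructs explicitly, and your argument is correct. The only difference is bookkeeping --- you get the vertex bound from the Pascal recursion and the edge bounds from a strengthened stage-wise induction (where the $+1$ slack is absorbed since $s\geq 3$ and $s+n\geq 5$), whereas the paper counts directly: every vertex except the last is joined backwards to at most $s+n-4$ vertices, at most $s-2$ of them in red, and a pigeonhole descent through the tree bounds the number of vertices by $\binom{s+n-2}{s-1}$.
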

\begin{proof}
We are going to define a set of vertices labeled by strings and the
associated set of edges to be drawn during the game as follows. The
first vertex exposed, will be labeled as $w_{\emptyset}$. Every
other vertex which we expose during the game will be connected by
edge to $w_{\emptyset}$. Recall that immediately after the edge is
exposed it is colored by painter. The first vertex which is
connected to $w_{\emptyset}$ by red (blue) edge is labeled $w_R$
($w_B$). Successively, we connect vertex $v$ to $w_R$ or $w_B$ if
and only if this vertex is already connected to $w_{\emptyset}$ by a
red or respectively blue edge.

More generally, if we have defined $w_{a_1 a_2 \cdots a_p}$ with
each $a_i = R$ or $B$ and $v$ is the first exposed vertex which is
connected to $w_{a_1 \cdots a_j}$ in color $a_{j+1}$ for each $j=
0, \cdots, p$, we label $v$ as $w_{a_1 \cdots a_{p+1}}$.
(When $j=0$, $w_{a_1 \cdots a_j}=w_{\emptyset}$.)
The only successively chosen vertices which we join to $w_{a_1 \cdots
a_{p+1}}$ by an edge will be those edges $v$ which are also joined
to $w_{a_1 \cdots a_j}$ in color $a_{j+1}$ for each $j= 0, \cdots,
p$.

Suppose now that we have exposed $\binom{s+n-2}{s-1}=\binom{s+n-3}{s-2}+
\binom{s+n-3}{s-1}$ vertices in
total. Since $w_{\emptyset}$ is connected to all vertices, its degree is
$\binom{s+n-2}{s-1}-1$. Thus $w_{\emptyset}$ is
connected either to
$\binom{s+n-3}{s-2}$ vertices in red or $\binom{s+n-3}{s-1}$
vertices in blue. If the former holds we look at the neighbors
of $w_R$, which is all vertices which labeled by string with first letter $R$. Otherwise we look at
neighbors of $w_B$. Suppose now that we are looking at the neighbors of
$w_{a_1 \cdots a_p}$, where $r$ of the $a_i$ are red and $b$ of them are
blue. Then, by our  construction, $w_{a_1 \cdots a_p}$ will have been
joined to $\binom{s+n-r-b-2}{s-r-1} - 1$ vertices. Now, either
$w_{a_1 \cdots a_p}$ is joined to $\binom{s+n-r-b-3}{s-r-2}$
vertices in red or $\binom{s+n-r-b-3}{s-r-1}$ vertices in blue. In
the first case we look at $w_{a_1 \cdots a_p R}$ and its neighbors and in the second
case at $w_{a_1 \cdots a_p B}$ and its neighbors. By the time we reach a string of
length $s+n-3$, we will have either $s-1$ reds or $n-1$ blues. If
$s-1$ of the $a_i$, say $a_{j_1+1}, \cdots, a_{j_{s-1}+1}$, are $R$,
then we know that the collection of vertices
$w_{a_1 \cdots a_{j_1}}, \cdots, w_{a_1 \cdots a_{j_{s-1}}},
w_{a_1 \cdots a_{j_{s-1}+1}}$ forms
a red clique of size $s$. Similarly, were $n-1$ of the $a_i$ blue,
we would have a blue clique of size $n$.

All that remains to do is to estimate how many edges builder
draws. Look on the vertices in the order they were exposed.
Clearly, for every vertex we can only look on the edges connecting it to preceeding vertices.
Notice that a vertex $w_{a_1 \cdots a_p}$ is
adjacent to precisely
$p$ vertices which were exposed before it. Moreover the number of red edges
connecting $w_{a_1 \cdots a_p}$ to vertices before it is
precisely the number of $a_i$ which are $R$. Since all
but the last vertex are labeled by strings of length at most $s+n-4$, we have at most
$(s+n-4)\binom{s+n-2}{s-1}+1$ total edges.
Similarly, all but the last vertex have at most $s-2$ symbols $R$ in their string, which shows that
the number of edges colored red during the game is at most
$(s-2)\binom{s+n-2}{s-1}+1$.
\end{proof}

The following result implies (\ref{mainlowerbound}).

\begin{corollary}
The Ramsey number $r_3(s,n)$ with $4 \leq s \leq n$ satisfies
\begin{equation} r_3(s,n) \leq
2^{\frac{(s-3)}{(s-2)!}(s+n)^{s-2}\log_2 (64n/s)}.
\end{equation}
\end{corollary}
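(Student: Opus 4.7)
The plan is to combine Theorem \ref{thmoffab} with Lemma \ref{gamelemma} and then optimize the free parameter $\alpha$.

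First I would apply Lemma \ref{gamelemma} with $s-1$ and $n-1$ in place of $s$ and $n$; this gives a builder strategy forcing a red $K_{s-1}$ or a blue $K_{n-1}$ using
\[
v=\binom{s+n-4}{s-2}\ \text{vertices},\quad r=(s-3)v+1\ \text{red edges},\quad m=(s+n-6)v+1\ \text{total edges},
\]
so $m-r=(n-3)v$. Plugging these into Theorem \ref{thmoffab} and taking base-$2$ logarithms yields
\[
\log_2 r_3(s,n)\le \log_2(v+1)+r\log_2(1/\alpha)+(m-r)\log_2\bigl(1/(1-\alpha)\bigr).
\]

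Next I would choose $\alpha=s/(8n)$, which satisfies $\alpha\le 1/2$ since $s\le n$. Using the elementary bound $-\ln(1-x)\le x/(1-x)\le 2x$ valid for $x\le 1/2$, one has $\log_2\bigl(1/(1-\alpha)\bigr)\le s/(4n\ln 2)$, so the coefficient of $v$ in the main estimate becomes
\[
(s-3)\log_2(8n/s)+(n-3)\cdot\frac{s}{4n\ln 2}\le (s-3)\log_2(8n/s)+\frac{s}{4\ln 2}.
\]
A quick check shows $3(s-3)\ge s/(4\ln 2)$ for every $s\ge 4$, so the right-hand side is bounded by $(s-3)\bigl[\log_2(8n/s)+3\bigr]=(s-3)\log_2(64n/s)$, precisely the target exponent.

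Finally I would absorb the lower-order terms into the slack of the estimate $v=\binom{s+n-4}{s-2}\le (s+n)^{s-2}/(s-2)!$. The contribution of $\log_2(v+1)\le (s-2)\log_2(s+n)$, together with the $+1$'s inside $r$ and $m-r$ (each multiplied by a factor $\log_2(1/\alpha)$ or $\log_2(1/(1-\alpha))$), amounts to $O\bigl((s-2)\log_2(s+n)\bigr)$, which is dwarfed by the gap between $(s+n-4)^{s-2}$ and $(s+n)^{s-2}$ in the main term. Collecting everything then gives the claimed inequality. The only genuine obstacle is the tuning of $\alpha$: if $\alpha$ is too small the red-loss factor $\alpha^{-r}$ explodes, while if $\alpha$ is too large the blue-loss factor $(1-\alpha)^{r-m}$ takes over. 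Setting $\alpha\asymp s/n$ balances these, and the specific constant $1/8$ is engineered so that the residual blue contribution fits inside the extra factor of $8$ that distinguishes $\log_2(64n/s)$ from $\log_2(8n/s)$.
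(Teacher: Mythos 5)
Your proposal is correct and follows essentially the same route as the paper: apply Lemma \ref{gamelemma} with $s-1$, $n-1$, feed the resulting $v$, $r$, $m$ into Theorem \ref{thmoffab}, and take $\alpha$ of order $s/n$ (the paper uses the exact optimizer $\alpha=r/m$ together with $m/r\le 4n/s$, while you fix $\alpha=s/(8n)$, but both choices lead to the same absorption of constants into the factor $64n/s$). The bookkeeping of the lower-order terms into the slack between $(s+n-4)^{s-2}$ and $(s+n)^{s-2}$ is also how the paper's $+1$'s and the $(v+1)$ prefactor are handled, so no substantive difference remains.
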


\noindent
{\bf Proof.} \hspace{1mm}
By Lemma \ref{gamelemma}, in the vertex on-line Ramsey game
builder has a strategy which ensures a red $K_{s-1}$ or a blue
$K_{n-1}$ using at most $v=\binom{s+n-4}{s-2}$ vertices,
$r=(s-3)\binom{s+n-4}{s-2}+1$ red edges, and
$m=(s+n-6)\binom{s+n-4}{s-2}+1$ total edges. To minimize the
function $\alpha^{-r}(1-\alpha)^{r-m}$, one should take
$\alpha=\frac{r}{m}$.
Note that $m/r \leq (s+n-6)/(s-3) \leq 4n/s$, $v < r \leq m/2$
and $r \leq \frac{(s-3)}{(s-2)!}(s+n)^{s-2}$. Hence, the Ramsey
number $r_3(s,n)$ satisfies
\begin{eqnarray*}
\hspace{2.4cm}
r_3(s,n) & \leq & (v+1)(m/r)^{r}(1-r/m)^{r-m} \leq
(v+1)(4n/s)^r(1-r/m)^{-m} \\
&\leq& r(4n/s)^r \big(1+2r/m\big)^m \leq r(4e^2n/s)^r <(64n/s)^r\\
& \leq & 2^{\frac{(s-3)}{(s-2)!}(s+n)^{s-2}\log_2 (64n/s)}.
\hspace{7.6cm} \Box
\end{eqnarray*}

\noindent Also, taking $\alpha=1/2$ in Theorem \ref{thmoffab}, it
is worth noting that in the diagonal case our results easily imply
the following theorem, which improves upon the bound $r_3(k,k)
\leq 2^{2^{4k}}$ due to Erd\H{o}s and Rado.
\begin{theorem}
\[\log_2 \log_2 r_3(k,k) \leq (2+o(1))k.\]
\end{theorem}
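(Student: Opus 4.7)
The plan is to apply Theorem \ref{thmoffab} in the symmetric case $s=n=k$ with the symmetric choice $\alpha=1/2$. The merit of taking $\alpha=1/2$ is that the factor $\alpha^{-r}(1-\alpha)^{r-m}$ collapses to $2^{m}$, so the bound on red edges becomes irrelevant and only $v$ and $m$ enter the estimate. For the diagonal problem there is no asymmetry between colors to exploit, so this is the natural choice.

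Next, I would feed in Lemma \ref{gamelemma} with parameters $(k-1,k-1)$, since Theorem \ref{thmoffab} requires a builder strategy yielding a red $K_{s-1}$ or blue $K_{n-1}$. This gives
\[
v \leq \binom{2k-4}{k-2}, \qquad m \leq (2k-6)\binom{2k-4}{k-2}+1,
\]
and therefore
\[
r_3(k,k) \leq (v+1)\cdot 2^{m} \leq \Bigl(\binom{2k-4}{k-2}+1\Bigr)\cdot 2^{(2k-6)\binom{2k-4}{k-2}+1}.
\]

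Now I would just take two logarithms. Using the crude bound $\binom{2k-4}{k-2}\leq 4^{k-2}$, one gets $m = O(k\cdot 4^{k})$, which absorbs the negligible $\log_2(v+1)=O(k)$ term. Hence $\log_2 r_3(k,k) = O(k\cdot 4^{k})$, and a second logarithm yields $\log_2\log_2 r_3(k,k)\leq 2k+O(\log k)=(2+o(1))k$, as required.

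There is no substantive obstacle; the theorem is essentially a corollary of Theorem \ref{thmoffab} and Lemma \ref{gamelemma}. The only care required is bookkeeping with the off-by-one in the indices (applying the game with parameter $k-1$ rather than $k$) and noting that the savings over the Erd\H{o}s--Rado bound $r_3(k,k)\leq 2^{2^{4k}}$ come entirely from replacing $\binom{r(k-1,k-1)}{2}$ — of order $16^{k}$ — with the much smaller on-line Ramsey edge count $m\sim k\cdot 4^{k}$ in the exponent.
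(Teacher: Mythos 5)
Your proposal is correct and is exactly the paper's intended argument: the paper derives this theorem by taking $\alpha=1/2$ in Theorem \ref{thmoffab} (so the bound collapses to $(v+1)2^{m}$) and plugging in the builder strategy of Lemma \ref{gamelemma} for a red $K_{k-1}$ or blue $K_{k-1}$, yielding $m=O(k\cdot 4^{k})$ and hence $\log_2\log_2 r_3(k,k)\leq(2+o(1))k$. Your bookkeeping with the $(k-1,k-1)$ parameters and the final logarithm estimates is accurate.
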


Our methods can also be used to study Ramsey numbers of
non-complete hypergraphs. To illustrate this, we will obtain a
lower bound on $f_3(N,4,3)$, slightly improving a result of
Erd\H{o}s and Hajnal. Let $K_t^{(3)}$ denote the complete
$3$-uniform hypergraph with $t$ vertices, and $K_t^{(3)}\setminus
e$ denote the $3$-uniform hypergraph with $t$ vertices formed by
removing one triple. For $k$-uniform hypergraphs $H$ and $G$, the
Ramsey number $r(H,G)$ is the minimum $N$ such that every red-blue
coloring of ${[N] \choose k}$ contains either a red copy of $H$ or
a blue copy of $G$. Note that an upper bound on $f_3(N,4,3)$ is
equivalent to a lower bound on the Ramsey number
$r\big(K_4^{(3)}\setminus e,K^{(3)}_n\big)$ because they are
inverse functions of each other. Erd\H{o}s and Hajnal \cite{EH72}
proved the following bounds: $$\frac{1}{2}\frac{\log_2 N}{\log_2
\log_2 N} \leq f_3(N,4,3) \leq (2\log_2 N)+1.$$ The upper bound
follows from the same coloring (discussed in the introduction)
based on tournaments which gives a lower bound on $r_3(4,n)$. We
will use our approach to improve the lower bound by an asymptotic
factor of $2$.
\begin{proposition}
We have $r \big(K_4^{(3)} \setminus e,K^{(3)}_n \big) \leq
(2en)^n$.
\end{proposition}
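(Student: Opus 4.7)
The plan is to adapt the greedy reduction framework of Theorem~\ref{thmoffab} to this asymmetric Ramsey number. Given a red-blue coloring $\chi$ of $\binom{[N]}{3}$, we greedily build vertices $v_1,v_2,\ldots$ with nested surviving sets $S_1\supset S_2\supset\cdots$ and an auxiliary 2-coloring $\chi'$ of pairs so that for every processed pair $(i,j)$, every triple $\{v_i,v_j,w\}$ with $w$ in the current surviving set has color $\chi'(i,j)$. The structural key is as follows: on $v_a<v_b<v_c<w$ the four triples $\{v_a,v_b,v_c\},\{v_a,v_b,w\},\{v_a,v_c,w\},\{v_b,v_c,w\}$ receive colors $\chi'(a,b),\chi'(a,b),\chi'(a,c),\chi'(b,c)$, so a red $K_4^{(3)}\setminus e$ appears (using any surviving $w$) as soon as the forward-oriented red $\chi'$-subgraph contains a vertex of out-degree $2$ or a directed path of length $2$. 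If neither configuration occurs, every vertex is a source (out-degree $1$, in-degree $0$), a sink (out-degree $0$, in-degree $\geq 1$), or isolated.

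With this observation I would prove an analog of Lemma~\ref{gamelemma}: builder exposes $v_1,\ldots,v_{2n-2}$ and draws every edge to previously exposed vertices, so $v=2n-2$ and $m=\binom{2n-2}{2}=(n-1)(2n-3)$. Either painter creates the forbidden red configuration, or the red subgraph has $s$ sources and $t$ sinks with $t\leq s$ (each sink has an incoming red edge) and $s+t\leq 2n-2$ (sources and sinks are disjoint); the non-sinks then form a blue independent set of size at least $(2n-2)-t\geq(2n-2)-(n-1)=n-1$, yielding a blue $K_{n-1}$ in $\chi'$, and hence a blue $K_n^{(3)}$ in $\chi$ after adjoining a surviving vertex. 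In either case the game uses $v=2n-2$ vertices, $r\leq n-1$ red edges, and $m=(n-1)(2n-3)$ total edges.

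Feeding these parameters into the quantitative bound $r(K_4^{(3)}\setminus e,K_n^{(3)})\leq(v+1)\alpha^{-r}(1-\alpha)^{r-m}$ (proved by exactly the same greedy reduction as Theorem~\ref{thmoffab}) and choosing the optimum $\alpha=r/m=1/(2n-3)$, the elementary inequality $(1+\tfrac{1}{2n-4})^{2n-4}\leq e$ yields
\[(2n-1)(2n-3)^{n-1}\Bigl(\tfrac{2n-3}{2n-4}\Bigr)^{2(n-1)(n-2)}\leq(2n-1)\bigl((2n-3)e\bigr)^{n-1}\leq 2n\cdot(2en)^{n-1}\leq(2en)^n,\]
which is the desired bound. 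The main conceptual obstacle is the first structural step: determining exactly which $\chi'$-configurations force a red $K_4^{(3)}\setminus e$ and then verifying that the complementary source-sink restriction leaves a blue clique of size $\geq n-1$ on only $2n-2$ vertices. Once that is pinned down, the on-line Ramsey lemma and the optimization over $\alpha$ follow the template already developed in Section~\ref{offdiagonalsection}.
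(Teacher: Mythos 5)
Your overall route is the same as the paper's: you characterize the pair-colorings $\chi'$ that avoid forcing a red $K_4^{(3)}\setminus e$ (the forward-oriented red graph has no out-degree $2$ and no directed path of length $2$, i.e.\ it is a star forest whose centres come after their leaves), you let builder expose all pairs, and you feed the resulting parameters into the machinery of Theorem~\ref{thmoffab}; the paper does exactly this, with $\alpha=1/(2n)$ in place of your $\alpha=1/(2n-3)$. However, there is a gap at the one step that actually drives the bound, namely the claim that the game uses at most $r\le n-1$ red edges. Your justification bounds the number of sinks (from $t\le s$ and $s+t\le 2n-2$ you get $t\le n-1$), but the number of red edges equals the number of sources $s$, which your inequalities allow to be as large as $2n-3$: a red star whose centre is the last exposed vertex avoids both forbidden configurations. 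With $r$ of order $2n$ the expression $(v+1)\alpha^{-r}(1-\alpha)^{r-m}$ is roughly $n^{2n}$, not $(2en)^n$, so the stated bound does not follow from the argument you gave, which only certifies the target within $v=2n-2$ vertices and $m=\binom{2n-2}{2}$ edges.

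What is needed, and what the paper proves, is a bound on the red edges at the moment the target structure first appears, not at the end of builder's schedule. The observation that repairs it is one you already have implicitly: sources are pairwise non-adjacent in red, and since every pair of previously exposed vertices has been drawn (and the current vertex can only be a sink), at every stage the sources form a blue clique in $\chi'$ whose size equals the current number of red edges. Hence, as long as no blue $K_{n-1}$ has been created, there are at most $n-2$ red edges, and at most one more edge is coloured before the game stops, giving $r\le n-1$ at the stopping time. This is precisely how the paper bounds the red edges: a disjoint union of stars with $m$ red edges has a red-independent set of size $m$ (the leaves), so $m+1<n$. With this stopping-time accounting inserted, your choice of parameters, the optimization $\alpha=r/m$, and the final numerical chain are correct and deliver $(2en)^n$.
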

{\bf Sketch of proof.}\hspace{1mm} We apply the exact same proof
technique as we did for Theorem \ref{thmoffab} except that we will
expose all edges. We have a coloring of the complete $3$-uniform
hypergraph with $N$ vertices which neither contains a red
$K_4^{(3)}\setminus e$ nor a blue set of size $n$. Note that the
coloring $\chi'$ of the edges of the complete graph with vertex set
$V=\{v_1,\ldots,v_{h-1}\}$ we get in the proof does not contain a
pair of monochromatic red edges $(v_j,v_i)$ and $(v_j,v_k)$ with $1
\leq j < i<k<h$ or $1 \leq i <j<k<h$, otherwise $v_i,v_j,v_k,v_h$
are the vertices of a red $K_4^{(3)}\setminus e$. Therefore, the red
graph in the coloring $\chi'$ is just a disjoint union of stars. Let
$m$ be the number of edges in the red graph. Note that disjoint
union of stars with $m$ edges has an independent set of size $m$ and
forms a bipartite graph. Therefore the red graph has an independent
set of size at least $\max\{m, (h-1)/2\}$. Such an independent set
in the red graph is a clique in the blue graph in the coloring
$\chi'$, and together with $v_h$ make a blue complete $3$-uniform
hypergraph in the coloring $\chi$. This, gives us inequalities $m+1
<n$ and $(h-1)/2+1<n$. With hindsight, we pick $\alpha=1/(2n)$. By
Theorem \ref{thmoffab}, this implies that
$$r(K_4^{(3)}
\setminus e,n) \leq (1+h)\alpha^{-m}(1-\alpha)^{m-{h-1 \choose
2}}\leq
(2n-1)\big(2n\big)^{n-2}\left(1-\frac{1}{2n}\right)^{-h^2/2}
\leq
(2en)^{n},$$ where we use that $3 \leq h \leq 2n-2, m \leq n-2$ and that $(1-1/x)^{1-x} \leq e$
for $x>1$. \qed

Theorem \ref{lowerbound} shows that $\log r_3(4,n) >cn \log n$ for
an absolute constant $c$. It would be also nice to give a similar
lower bound (if it is true) for $r\big(K_4^{(3)}\setminus
e,K_n^{(3)}\big)$ since then we would know that $\log
r\big(K_4^{(3)} \setminus e,K_n^{(3)}\big)$ has order of $n
\log n$.

\section{A lower bound construction}

The purpose of this section is to prove Theorem \ref{lowerbound}
which gives a new lower bound on $r_3(s,n)$. To do this, we need
to recall an estimate for graph Ramsey numbers. As we already
mentioned in (\ref{eq2}), for sufficiently large $n$ and fixed
$s$, $r(s,n)>c\left(n/\log n\right)^{(s+1)/2}>n^{3/2}$. Also, for
all $4 \leq s \leq n$ and $n$ sufficiently large, one can easily
show that $r(s,n)>(\frac{n+s}{s})^{s/3}$. (This is actually not
the best lower bound for $r(s,n)$ but it is enough for our
purposes.) Indeed, if $s=3$, this bound is trivial. For $s \geq
4$, consider a random red-blue edge-coloring of the complete graph
on $N=(\frac{n+s}{s})^{s/3}$ vertices in which each edge is red
with probability $p=\big(\frac{s}{n+s}\big)^{0.9}$. It is easy to
check that the expected number of monochromatic red $s$-cliques
and blue $n$-cliques in this coloring is ${N \choose s}p^{{s
\choose 2}}+{N \choose n}(1-p)^{{n \choose 2}}<1$. These estimates
together with the next theorem clearly imply Theorem
\ref{lowerbound}.

\begin{theorem}
For all sufficiently large $n$ and $4 \leq s \leq n$,
$$r_3(s,n)>\big(r(s-1,n/4)-1\big)^{n/24}.$$
\end{theorem}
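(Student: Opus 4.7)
The plan is to construct, on a vertex set of size $N = (r(s-1,n/4)-1)^{n/24}$, a red-blue coloring of its triples containing neither a red $K_s$ nor a blue $K_n$. Set $M = r(s-1,n/4) - 1$ and $L = n/24$, and by the definition of $r(s-1,n/4)$ fix an auxiliary edge coloring $\chi$ of $\binom{[M]}{2}$ with no red $K_{s-1}$ and no blue $K_{n/4}$. Identify the vertex set with the length-$L$ strings $[M]^L$, so that $|V| = M^L = N$.

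I would define a coloring of $\binom{[M]^L}{3}$ using $\chi$ via the following scheme. For each triple of distinct strings $\{x,y,z\}$, let $i = i(x,y,z)$ be the smallest coordinate at which the projections $x_i, y_i, z_i$ are not all equal. If exactly two distinct values $a, b$ appear at coordinate $i$ (one of them with multiplicity two), assign the color $\chi(a,b)$; if three distinct values appear at coordinate $i$, assign a color depending on the three $\chi$-edges among them via a rule (for example, a lex-first or majority rule) chosen so that the red-clique and blue-clique analyses below both succeed.

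To rule out a red $K_s$, I suppose that $V \subset [M]^L$ is a red clique of size $s$ and analyze $V$ by projecting to its first coordinate. Partition $V$ into first-coordinate groups $V = \bigsqcup_a V_a$: triples that span several distinct groups contribute $\chi$-red constraints on the first-coordinate values (via the two-distinct and three-distinct rules), while triples inside a single group reduce the problem to length $L - 1$. Iterating the projection across the $L$ coordinates yields a $\chi$-red clique of size $s - 1$ in $[M]$, contradicting the Ramsey property of $\chi$.

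To rule out a blue $K_n$, I suppose that $W\subset [M]^L$ is a blue clique of size $n$ and use a pigeonhole argument over the $L = n/24$ coordinates: at some coordinate $k^*$, a subset $W' \subseteq W$ of size at least $n/4$ has pairwise distinct projections, and the blueness of all triples among $W'$ then forces the set $\{w_{k^*} : w \in W'\}$ of $n/4$ values in $[M]$ to form a blue $\chi$-clique, contradicting the Ramsey property of $\chi$. The main obstacle is choosing the three-distinct-value rule so that both analyses succeed simultaneously: the red-side extraction must yield a $\chi$-red clique of size exactly $s - 1$ (rather than blowing up recursively to a clique of size only $O((s-2)^L)$), and the blue-side pigeonhole must yield a $\chi$-blue clique of size $n/4$ (rather than being diluted by a factor of $(n/4)^L$), so that the parameters line up exactly to give the exponent $L = n/24$ claimed in the bound.
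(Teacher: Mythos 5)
Your proposal is not a proof: the construction is never actually specified, and the two steps you do sketch both fail. The three-distinct-value rule is the heart of your scheme, and you leave it as a placeholder while acknowledging that it must simultaneously make the red and blue analyses work; these requirements pull in opposite directions. If a triple with three distinct projections is red only when all three $\chi$-edges among the values are red, then the red recursion you describe really does blow up: grouping a red clique by its first coordinate only forces the at most $s-2$ group values to span $\chi$-red pairs, and each group may itself be a red clique in the remaining $L-1$ coordinates, so the construction admits red cliques of size up to roughly $(s-2)^L \gg s$, which destroys the theorem. If instead you relax the rule (red when at least one, or a majority, of the three $\chi$-edges is red) to tame the red side, then blueness of a triple no longer forces all three $\chi$-edges to be blue and the blue extraction collapses. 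Moreover, the blue-side pigeonhole you propose is simply false: take $n$ distinct strings using only two symbols of $[M]$ (possible since $2^{n/24} > n$ for large $n$); then at every coordinate at most two distinct values appear, so no coordinate admits even three strings with pairwise distinct projections, let alone $n/4$. So neither half of the argument survives, and there is no evidence that any first-differing-coordinate product rule can be made to work.

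The paper's proof is genuinely different and avoids the product structure entirely. With $\ell = n/4$ and $r = r(s-1,\ell)-1$, it fixes an auxiliary coloring $c_1$ of $\binom{[r]}{2}$ with no red $K_{s-1}$ and no blue $K_\ell$, and then colors the edges of $K_N$, $N = r^{n/24}$, by a uniformly random map $c_2:\binom{[N]}{2}\to[r]$. A triple $\{a,b,c\}$ with $a<b<c$ gets color $c_1\bigl(c_2(a,b),c_2(a,c)\bigr)$ if $c_2(a,b)\neq c_2(a,c)$, and blue if they coincide. Two devices you are missing do all the work: first, anchoring at the least vertex $a$ means a red set $\{u_1,\ldots,u_s\}$ forces the $s-1$ colors $c_2(u_1,u_j)$ to be distinct and to form a red $(s-1)$-clique in $c_1$, so red $K_s$ is excluded deterministically with no recursion at all; second, the blue side is handled probabilistically rather than by pigeonhole: for each vertex $v_i$ of a putative blue $n$-set, the colors $c_2(v_i,v_j)$, $j>i$, must take fewer than $\ell$ distinct values, an event of probability at most $\binom{r}{\ell-1}\bigl(\frac{\ell-1}{r}\bigr)^{n-i}$, and a first-moment computation over all $n$-sets shows the expected number of blue $n$-sets is below $1$, which is exactly what produces the exponent $n/24$. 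If you want to salvage your write-up, you should abandon the string/product encoding and adopt this random-lift construction.
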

\begin{proof}
Let $\ell=n/4$, $r=r(s-1,\ell)-1$, $N=r^{n/24}$, and $c_1:{[r]
\choose 2} \rightarrow \{\textrm{red, blue}\}$ be a red-blue
edge-coloring of the complete graph on $[r]$ with no red clique of
size $s-1$ and no blue clique of size $\ell$. Consider a coloring
$c_2:{[N] \choose 2} \rightarrow [r]$ picked uniformly at random
from all $r$-colorings of ${[N] \choose 2}$, i.e., each edge has
probability $\frac{1}{r}$ of being a particular color independent
of all other edges. Using the auxiliary colorings $c_1$ and $c_2$,
we define the red-blue coloring $c:{[N] \choose 3} \rightarrow
\{\textrm{red, blue}\}$ where the color of a triple $\{a,b,c\}$
with $a<b<c$ is $c_1\big(c_2(a,b),c_2(a,c)\big)$ if $c_2(a,b) \not
= c_2(a,c)$ and is blue if $c_2(a,b)=c_2(a,c)$. We next show that
in coloring $c$ there is no red set of size $s$ and with positive
probability no blue set of size $n$, which implies the theorem.

First, suppose that the coloring $c$ contains a red set
$\{u_1,\ldots,u_s\}$ of size $s$ with $u_1 < \ldots < u_s$. Then all
the colors $c_2(u_1,u_j)$ with $2 \leq j \leq s$ are distinct and
form a red clique of size $s-1$ in $c_1$, a contradiction.

Next, we estimate the expected number of blue cliques of size $n$ in
coloring $c$. Let $\{v_1,\ldots,v_n\}$ with $v_1<\ldots<v_n$ be a
set of $n$ vertices. Fix for now $1 \leq i \leq n$. If all triples
$\{v_i,v_j,v_k\}$ with $i<j<k$ are blue, then the distinct colors
among the colors $c_2(v_i,v_j)$ for $i<j \leq n$ must form a blue
clique in coloring $c_1$. Therefore the number of distinct colors
$c_2(v_i,v_j)$ with $i<j \leq n$ is less than $\ell$. Every such
subset of distinct colors is contained in at least one of the ${r
\choose \ell-1}$ subsets of $[r]$ of size $\ell-1$.  If we fix a set
of $\ell-1$ colors, the probability that each of the colors
$c_2(v_i,v_j)$ with $i<j \leq n$ is one of these $\ell-1$ colors is
$\left(\frac{\ell-1}{r}\right)^{n-i}$. Therefore the expected number
of blue cliques of size $n$ in coloring $c$ is at most

\begin{eqnarray*} {N \choose n}\prod_{i=1}^n {r \choose \ell-1}
\left(\frac{\ell-1}{r}\right)^{n-i} & \leq & N^n {r \choose
\ell-1}^n\left(\frac{\ell-1}{r}\right)^{{n \choose 2}}\leq N^n
\left(\frac{e r}{\ell-1}\right)^{(\ell-1)
n}\left(\frac{\ell-1}{r}\right)^{{n \choose 2}} \\ & = &
\left(Ne^{\ell-1}\left(\frac{\ell-1}{r}\right)^{\frac{n-1}{2}-(\ell-1)}
\right)^n  <
\left(Ne^{\ell-1}\left(r^{-1/3}\right)^{\frac{n-1}{2}-(\ell-1)}
\right)^n \\ & < & \left(N^2\left(r^{-1/3}\right)^{n/4} \right)^n
=1,\end{eqnarray*} where we use that $\ell-1<r^{2/3}$, $\ell=n/4$,
and $N=r^{n/24} =r^{\ell/6}>e^{\ell}$. Hence, there is a coloring
$c$ with no red set of size $s$ and no blue set of size $n$.
\end{proof}

An additional feature of our new lower bound on $r_3(s,n)$ is that
it increases continuously with growth of $s$ and for $s=n$
coincides with the bound $r_3(n,n) \geq 2^{cn^2}$, which was given
by Erd\H{o}s, Hajnal, and Rado \cite{EHR65}. For example, for
$n^{1/2} \ll s \ll n$, the previously best known bound for
$r_3(s,n)$ was essentially $r_3(s,n) \geq r_3(s,s) \geq 2^{cs^2}$.

\section{Bounding $r_3(n,n,n)$}\label{sectionthreecolor}

We now prove the lower bound, $r_3 (n,n,n) \geq 2^{n^{c \log n}}$,
mentioned in the introduction. Though our method follows the
stepping-up tradition of Erd\H{o}s and Hajnal, it is curious to note
that their own best lower bound on the problem, $r_3(n,n,n) \geq
2^{c n^2 \log^2 n}$, is not proven in this manner. In Erd\H{o}s and
Hajnal's proof that the $r_3(n,n,n,n)>2^{2^{cn}}$, they use the
stepping up lemma starting from a $2$-coloring of a complete graph
with $r(n-1,n-1)-1$ vertices not containing a monochromatic clique
of size $n-1$ to obtain a $4$-coloring of the triples of a set of
size $2^{r(n-1,n-1)-1}$ without a monochromatic set of size $n$. Our
proof that $r_{3}(n,n,n)>2^{n^{c\log n}}$ is also based on the
stepping-up lemma, using essentially the following idea. We start
with a $2$-coloring of the complete graph on $r(\log_2 n, n-1)-1$
vertices which contains neither a monochromatic red clique of size
$\log_2 n$ nor a monochromatic blue clique of size $n-1$. Then we
obtain a $4$-coloring of the triples of a set of size $2^{r(\log_2
n, n-1)-1}\geq 2^{n^{c\log n}}$ as in the Erd\H{o}s-Hajnal proof.
Next we combine two of the four color classes to obtain a
$3$-coloring of the triples. Finally, we carefully analyze this
$3$-coloring to show that it does not contain a monochromatic set of
size $n$.

\begin{theorem} \label{steppingup}
\[r_3(n,n,n) > 2^{r(\log_2 n, n-1) - 1}.\]
\end{theorem}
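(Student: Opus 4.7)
The plan is to adapt the Erdős–Hajnal stepping-up construction. Setting $N = r(\log_2 n, n-1) - 1$, I begin with a red–blue edge-coloring $\chi$ of $K_N$ that avoids both a red $K_{\log_2 n}$ and a blue $K_{n-1}$, and I will use $\chi$ to build a $3$-coloring of the triples of $\{0,1\}^N$ (a set of size $2^N$) with no monochromatic subset of size $n$.

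Identify $\{0,1\}^N$ with the integers $0 \le a < 2^N$, and for $a \neq b$ let $\delta(a,b)$ be the largest coordinate at which $a$ and $b$ differ. Standard properties give, for $a<b<c$, both $\delta(a,b) \neq \delta(b,c)$ and $\delta(a,c) = \max(\delta(a,b), \delta(b,c))$. I define a $4$-coloring of triples by assigning to $\{a,b,c\}$ with $a<b<c$ the pair $(\chi(\delta(a,b), \delta(b,c)),\, \operatorname{sgn}(\delta(b,c) - \delta(a,b))) \in \{\mathrm{red}, \mathrm{blue}\} \times \{<, >\}$, and then merge the two classes whose first coordinate is $\mathrm{red}$ into a single color, still called \emph{red}, leaving $(\mathrm{blue},<)$ and $(\mathrm{blue},>)$ as separate colors.

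I then show that the resulting $3$-coloring has no monochromatic subset of size $n$. For a monochromatic $(\mathrm{blue},<)$-set $a_1 < \cdots < a_m$, writing $\delta_i := \delta(a_i, a_{i+1})$, the consecutive triples force $\delta_1 < \cdots < \delta_{m-1}$, and for any $p<q$ the triple $(a_p, a_{p+1}, a_{q+1})$ yields $\chi(\delta_p, \delta_q) = \mathrm{blue}$, so $\{\delta_1, \ldots, \delta_{m-1}\}$ is a blue clique of size $m-1$ in $\chi$, forcing $m \le n-1$; the $(\mathrm{blue},>)$ case is symmetric. The delicate part is the red case: I claim by induction on $m$ that a monochromatic red set $V$ of size $m$ forces a red clique of size at least $\log_2 m$ in $\chi$, so that $m = n$ would contradict $\chi$'s avoidance of $K_{\log_2 n}$ in red. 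Let $d^*$ be the largest coordinate at which $V$ contains both values, and split $V = V_0 \sqcup V_1$ accordingly. Every cross-pair satisfies $\delta = d^*$, every intra-pair satisfies $\delta < d^*$, and both $V_0, V_1$ are themselves monochromatic red. Choose the larger half $V_\varepsilon$, of size at least $m/2$; by induction it yields a red clique $K$ in $\chi$ of size at least $\log_2 m - 1$, whose elements are $\delta$-values strictly less than $d^*$. For any $v = \delta(a,b) \in K$ with $a,b \in V_\varepsilon$ and any $c$ in the opposite half, the triple $\{a,b,c\}$, when ordered, has one of its two $\delta$'s equal to $v$ and the other equal to $d^*$; since the triple lies in the red class, $\chi(v, d^*) = \mathrm{red}$, so $\{d^*\} \cup K$ is a red clique of size at least $\log_2 m$.

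The main obstacle is the analysis of the merged red class: merging $(\mathrm{red},<)$ and $(\mathrm{red},>)$ destroys the monotonicity of $(\delta_i)$ that ordinarily forces a full $K_{m-1}$ in $\chi$, and the recursion on the splitting bit recovers only a logarithmic red clique. This weakening is exactly what permits the relaxed red-clique restriction "no $K_{\log_2 n}$" on the input coloring $\chi$, and hence the much larger starting value $N = r(\log_2 n, n-1) - 1$, which ultimately yields $r_3(n,n,n) > 2^{r(\log_2 n, n-1) - 1}$.
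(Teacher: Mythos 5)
Your proposal is correct and takes essentially the same approach as the paper: the identical stepping-up $3$-coloring (merge the two classes coming from one color of $\chi$, keep the other color split by the order of the two $\delta$-values), the same monotone-$\delta$ argument showing a monochromatic set in an unmerged class yields a clique of size $n-1$ in $\chi$, and for the merged class a halving recursion producing a clique of size about $\log_2 n$. Your induction on the vertex set split at the highest coordinate of disagreement is just a reformulation of the paper's recursion on index intervals of the $\delta$-sequence (the unique maximal $\delta_i$ is exactly that coordinate), so the arguments coincide.
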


\begin{proof}
Let $G$ be a graph on $m=r(\log_2 n, n-1) - 1$ vertices which
contains neither a clique of size $n-1$ nor an independent set of
size $\log_2 n$ and let $\bar G$ be the complement of $G$. We are
going to consider the complete $3$-uniform hypergraph $H$ on the set
\[T = \{(\gamma_1, \cdots, \gamma_m) : \gamma_i = 0 \mbox{ or }
1\}.\]

If $\epsilon = (\gamma_1, \cdots, \gamma_m)$, $\epsilon' =
(\gamma'_1, \cdots, \gamma'_m)$ and $\epsilon \neq \epsilon'$,
define
\[\delta(\epsilon, \epsilon') = \max\{i : \gamma_i \neq
\gamma'_i\},\] that is, $\delta(\epsilon, \epsilon')$ is the
largest coordinate at which they differ. Given this, we can define
an ordering on $T$, saying that
\[\epsilon < \epsilon' \mbox{ if } \gamma_i = 0, \gamma'_i = 1,\]
\[\epsilon' < \epsilon \mbox{ if } \gamma_i = 1, \gamma'_i = 0.\]
Equivalently, associate to any $\epsilon$ the number $b(\epsilon)
= \sum_{i=1}^m \gamma_i 2^{i-1}$. The ordering then says simply
that $\epsilon < \epsilon'$ iff $b(\epsilon) < b(\epsilon')$.

We will further need the following two properties of the
function $\delta$ which one can easily prove.\\

(a) If $\epsilon_1 < \epsilon_2 < \epsilon_3$, then
$\delta(\epsilon_1, \epsilon_2) \neq \delta(\epsilon_2,
\epsilon_3)$ and

(b) if $\epsilon_1 < \epsilon_2 < \cdots < \epsilon_p$, then
$\delta (\epsilon_1, \epsilon_p) = \max_{1 \leq i \leq p-1}
\delta(\epsilon_i, \epsilon_{i+1})$. \\
In particular, these properties imply that there is a unique index $i$ which achieves
maximum of $\delta(\epsilon_i, \epsilon_{i+1})$. Indeed suppose that there are indices $i<i'$ such that
$$\ell =\delta(\epsilon_i, \epsilon_{i+1})=\delta(\epsilon_{i'}, \epsilon_{i'+1})=\max_{1 \leq j \leq
p-1} \delta(\epsilon_j, \epsilon_{j+1}).$$
Then, by property (b) we also have that
$\ell=\delta(\epsilon_i, \epsilon_{i'})=\delta(\epsilon_{i'}, \epsilon_{i'+1})$. This contradicts
property (a) since $\epsilon_{i}<\epsilon_{i'}<\epsilon_{i'+1}$.

We are now ready to color the complete $3$-uniform hypergraph $H$
on the set $T$. If $\epsilon_1 < \epsilon_2 < \epsilon_3$, let
$\delta_1 = \delta(\epsilon_1, \epsilon_2)$ and $\delta_2 =
\delta(\epsilon_2, \epsilon_3)$. Note that, by property (a) above,
$\delta_1$ and $\delta_2$ are not equal. Color the edge
$\{\epsilon_1, \epsilon_2, \epsilon_3\}$ as follows:\\

$C_1$, if $(\delta_1, \delta_2) \in e(G)$ and $\delta_1 <
\delta_2$;

$C_2$, if $(\delta_1, \delta_2) \in e(G)$ and $\delta_1 >
\delta_2$;

$C_3$, if $(\delta_1, \delta_2) \not\in e(G)$, i.e.,
it is an edge in $\bar G$.\\

Suppose that $C_1$ contains a clique $\{\epsilon_1, \cdots,
\epsilon_n\}_<$ of size $n$. For $1 \leq i \leq n-1$, let $\delta_i
= \delta(\epsilon_i, \epsilon_{i+1})$. Note that the $\delta_i$ form
a monotonically increasing sequence, that is $\delta_1 < \delta_2 <
\cdots < \delta_{n-1}$. Also, note that since, for any $1 \leq i < j
\leq n-1$, $\{\epsilon_i, \epsilon_{i+1}, \epsilon_{j+1}\} \in C_1$,
we have, by property (b) above, that $\delta(\epsilon_{i+1},
\epsilon_{j+1}) = \delta_j$, and thus $\{\delta_i, \delta_j\} \in
e(G)$. Therefore, the set $\{\delta_1, \cdots, \delta_{n-1}\}$ must
form a clique of size $n-1$ in $G$. But we have chosen $G$ so as not
to contain such a clique, so we have a contradiction. A similar
argument shows that $C_2$ also cannot contain a clique of size $n$.

For $C_3$, assume again that we have a monochromatic clique
$\{\epsilon_1, \cdots, \epsilon_n\}_<$ of size $n$, and, for $1 \leq
i \leq n-1$, let $\delta_i = \delta(\epsilon_i, \epsilon_{i+1})$.
Not only can we no longer guarantee that these $\delta_i$ form a
monotonic sequence, but we can no longer guarantee that they are
distinct. Suppose that there are $d$ distinct values of $\delta$,
given by $\{\Delta_1, \cdots, \Delta_d\}$, where $\Delta_1 > \cdots
> \Delta_d$. We will consider the subgraph of $\bar G$ induced by
this vertices. Note that, by definition of the coloring $C_3$, the
vertices $\Delta_i$ and $\Delta_j$ are adjacent in $\bar G$ if there
exists $\epsilon_r < \epsilon_s < \epsilon_t$ with
$\Delta_i=\delta(\epsilon_r, \epsilon_s)$ and $ \Delta_j =
\delta(\epsilon_s,\epsilon_t)$. We show that this set necessarily
has a complete subgraph on $\log_2 n$ vertices, contradicting our
assumptions on $\bar G$.

Since $\Delta_1$ is the largest of the $\Delta_j$, there is a unique
index $i_1$ such that $\Delta_1=\delta_{i_1}$. Note that $\Delta_1$ is adjacent in $\bar G$ to
all $\Delta_j, j>1$. Indeed, every such $\Delta_j=\delta(\epsilon_{i'}, \epsilon_{i'+1})$  for some
index $i'\not =i_1$ and suppose $i_1<i'$ (the other case is similar). Then
$\epsilon_{i_1} < \epsilon_{i'} < \epsilon_{i'+1}$. Also, by property (b) and maximality
of $\Delta_1$, we have that $\Delta_1=\delta(\epsilon_{i_1}, \epsilon_{i_1+1})=\delta(\epsilon_{i_1},
\epsilon_{i'})$, and therefore it is connected to $\Delta_j$ in $\bar G$.
Now either there are $(n-2)/2=n/2-1$
values of $j$ greater than $i_1$ or less than $i_1$. Let $V_1$ be
the larger of these two intervals.

Suppose, inductively, that one has been given an interval $V_{j-1}$ in
$[n-1]$. Look at the set $\{\delta_a | a \in V_{j-1}\}$. One of
these $\delta$, say $\delta_{i_{j}}$, will be the largest
and as we explain above, will be connected to every other $\delta_a$ with $a \in
V_{j-1}$. There are at least $(|V_{j-1}| - 1)/2$ indices in either $\{a \in
V_{j-1} | a < i_j\}$ or $\{a \in V_{j-1} | a > i_j\}$. Let $V_{j}$ be
the larger of these two intervals, so in particular $|V_{j}| \geq (|V_{j-1}| -
1)/2$. By induction, it is easy to show that $|V_j| \geq \frac{n}{2^j} - 1$. Therefore,
for $j \leq \log_2 n-1$, $|V_j| \geq 1$ and, hence, the set
$\delta_{i_1}, \cdots, \delta_{i_{\log_2 n}}$ forms a
clique in $\bar G$, as required. This contradicts the fact that $G$ has no independent set of this size
and completes the proof.
\end{proof}

As discussed in the beginning of Section 3, the probabilistic method
demonstrates that, for $s \leq n$, $r(s,n) \geq
\big(\frac{n+s}{s}\big)^{c's}$. Substituting this bound with
$s=\log_2 n$ into Theorem \ref{steppingup} implies the desired
result $r_3(n,n,n) \geq 2^{n^{c \log n}}$.

\section{A hypergraph problem of Erd\H{o}s and
Hajnal}\label{sectionnext}

In this section we determine the function $h_1^{(3)}(s)$ for
infinitely many values of $s$ and find a small interval containing
$h_1^{(3)}(s)$ for all values of $s$. Recall that $f_3(N,s,t)$ is
the largest integer $n$ for which every red-blue coloring of ${[N]
\choose 3}$ has a red $n$-element set or a set of size $s$ with at
least $t$ blue triples. Also recall that $h_1^{(3)}(s)$ is the
least $t$ for which $f_3(N,s,t)$ stops growing like a power of $N$
and starts growing like a power of $\log N$, i.e.,
$f_3(N,s,h_1^{(3)}(s)-1)>N^{c_1}$ but $f_3(N,s,h_1^{(3)}(s))<(\log
N)^{c_2}$.

Consider the minimal family $\mathcal{F}$ of $3$-uniform
hypergraphs defined as follows. The empty hypergraphs on $1$ and
$2$ vertices and an edge are elements of $\mathcal{F}$. If $H,G
\in \mathcal{F}$ and $v$ is a vertex of $H$, then the following
$3$-uniform hypergraph $H(G,v)$ is in $\mathcal{F}$ as well. The
vertex set of $H(G,v)$ is $(V(H) \setminus \{v\}) \cup V(G)$ and
its edges consist of the edges of $G$, the edges of $H$ not
containing $v$, and all triples $\{a,b,c\}$ with $a,b \in H$ and
$c \in G$ for which $\{a,b,v\}$ is an edge of $H$. Erd\H{o}s and
Hajnal showed that for every hypergraph $H \in \mathcal{F}$ on $s$
vertices, every red-blue coloring of the triples of a set of size
$N$ has either a red copy of $H$ or a blue set of size
$N^{\epsilon_s}$. This can be shown by induction on $s$ using the
following claim which can be proved using a simple counting
argument. If $H,G \in \mathcal{F}$, then any red-blue coloring of
the triples of a set of size $N$ without a blue set of size
$N^{\epsilon_1}$ has $N^{\delta}$ copies of $H$ all sharing the
same copy of $H \setminus v$. In these $N^{\delta}$ vertices, we
either get a red copy of $G$ which together with the copy of $H
\setminus v$ make a copy of $H(G,v)$ or a blue set of size
$(N^{\delta})^{\epsilon_2}$. By choosing
$\epsilon=\min(\epsilon_1,\delta\epsilon_{2})$, we get the desired
result.

Let $g_1^{(3)}(s)$ be the maximum number of edges in a hypergraph
in $\mathcal{F}$ with $s$ vertices. One can check that every
hypergraph $H \in \mathcal{F}$ on $s$ vertices has the following
structure. Its vertex set can be partitioned into three parts $A,
B, C$ (one of which might be empty) such that all triples
intersecting $A$, $B$, and $C$ are edges of $H$ and subhypergraphs
induced by sets $A$, $B$, and $C$ are also members of
$\mathcal{F}$. This implies that the function $g_1^{(3)}(s)$ can
also be defined recursively. Put $g_1^{(3)}(1)=g_1^{(3)}(2)=0$.
Assume that $g_1^{(3)}(m)$ has already been defined for all $m<s$.
Then
$$g_1^{(3)}(s)=\max_{a+b+c=s}g_1^{(3)}(a)+g_1^{(3)}(b)+g_1^{(3)}(c)+abc.$$
It is not difficult to see that the maximum is obtained when $a$,
$b$, and $c$ are as nearly equal as possible. It follows from the
definition of $h_1^{(3)}$ and the result in the previous paragraph
that $h_1^{(3)}(s)>g_1^{(3)}(s)$. Erd\H{o}s and Hajnal further
conjectured that this bound is tight.

\begin{conjecture}\label{confirst}
For all positive integers $s$, $h_1^{(3)}(s)=g_1^{(3)}(s)+1$.
\end{conjecture}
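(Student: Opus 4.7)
The plan is to prove the conjecture in two directions. For the lower bound $h_1^{(3)}(s) > g_1^{(3)}(s)$, I would invoke the Erd\H{o}s--Hajnal embedding result described in the paragraph preceding the conjecture, applied to the member $H^* \in \mathcal{F}$ on $s$ vertices realizing the maximum $g_1^{(3)}(s)$. Swapping the two colors in that result shows that every red-blue coloring of the triples of $[N]$ contains either a blue copy of $H^*$---in particular, an $s$-set with at least $g_1^{(3)}(s)$ blue triples---or a red set of size $N^{\epsilon_s}$. Hence $f_3(N,s,g_1^{(3)}(s)) \geq N^{\epsilon_s}$, giving $h_1^{(3)}(s) > g_1^{(3)}(s)$ for every $s$; this direction is essentially already due to Erd\H{o}s and Hajnal.

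For the matching upper bound $h_1^{(3)}(s) \leq g_1^{(3)}(s)+1$, which is the heart of the conjecture, I expect the exact equality to be attainable only for a special infinite family of $s$, consistent with the paper's claim that $h_1^{(3)}(s)$ is pinned down for \emph{infinitely many} $s$ but merely approximated for all $s$. The key observation is that the recurrence $g_1^{(3)}(s) = \max_{a+b+c=s}\bigl[g_1^{(3)}(a)+g_1^{(3)}(b)+g_1^{(3)}(c)+abc\bigr]$ unwinds, for $s = 3^k$, to the closed form $g_1^{(3)}(3^k) = 3^k(3^{2k}-1)/24 = \binom{3^k+1}{3}/4$, which is precisely the Kendall--Babington-Smith upper bound on the number of cyclic triangles in a tournament on $s$ vertices. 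This coincidence motivates the construction: take a random tournament $T$ on $[N]$ and color each triple $\{a,b,c\}$ blue if it is a cyclic triangle in $T$ and red if it is transitive.

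Two observations then finish the argument. First, a red $n$-set in this coloring is exactly a transitive subtournament of $T$ on $n$ vertices, and a routine first-moment calculation shows that with high probability a random tournament on $[N]$ has no transitive subtournament of size larger than $O(\log N)$. Second, for any $s$-set $S \subseteq [N]$ the blue triples inside $S$ are precisely the cyclic triangles of the induced tournament $T[S]$, and by Kendall--Babington-Smith this count is at most $\binom{s+1}{3}/4 = g_1^{(3)}(s)$ whenever $s = 3^k$. Combining these, $f_3\bigl(N,s,g_1^{(3)}(s)+1\bigr) = O(\log N)$, so $h_1^{(3)}(s) \leq g_1^{(3)}(s)+1$, and together with the lower bound Conjecture~\ref{confirst} holds for every $s$ of the form $3^k$.

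The main obstacle is extending the upper bound to arbitrary $s$. A direct comparison shows that $g_1^{(3)}(s)$ and the Kendall--Babington-Smith bound coincide on a sparse but still infinite set of values (including $s = 3,4,6,8,9,10,12,\ldots$) but disagree for others such as $s = 5,7,11$; on those $s$ the random-tournament coloring yields only an upper bound of the form $g_1^{(3)}(s)+O(1)$ rather than the sharp $g_1^{(3)}(s)+1$. Closing this remaining gap would require a more delicate construction---presumably one whose structure mirrors the recursive $A \cup B \cup C$ decomposition realizing $g_1^{(3)}(s)$, for instance by coloring cross-part triples using an auxiliary tournament chosen so that every induced subtournament on the relevant sizes achieves the Kendall--Babington-Smith maximum. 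Accordingly, I expect the paper to confirm the conjecture in full only on such a family of $s$ while merely approximating $h_1^{(3)}(s)$ up to an additive constant for all $s$.
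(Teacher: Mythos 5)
Since the statement is a conjecture, the paper does not prove it in full either; what it establishes is exactly what you propose, and by essentially the same squeeze. Your lower bound (swap colors in the Erd\H{o}s--Hajnal embedding result for the edge-maximal member of $\mathcal{F}$ to get $h_1^{(3)}(s)>g_1^{(3)}(s)$) is the paper's, and your upper bound via a random tournament, the exact maximum $T(s)$ for the number of cyclic triangles, and the identity $g_1^{(3)}(3^k)=\frac14\binom{3^k+1}{3}=T(3^k)$ is the paper's Proposition confirming the conjecture at powers of $3$ (proved there by the induction ``$s$ odd and $T(s)=g_1^{(3)}(s)$ implies the same for $3s$''). The one structural difference is that you color triples cyclic/transitive directly, giving $h_1^{(3)}(s)-1\le T(s)$ in one step, whereas the paper inserts the intermediate quantity $F_1(s)$ (the random three-coloring $I,II,III$ of graph edges) and proves $h_1^{(3)}(s)-1\le F_1(s)\le T(s)$; your shortcut is perfectly valid for Conjecture \ref{confirst}, but the longer chain is what lets the paper also confirm Conjecture \ref{consecond}, i.e.\ $F_1(s)=g_1^{(3)}(s)$, at these values. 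Two smaller corrections: the paper goes beyond powers of $3$, deriving recursions for the deficiency $d(s)=T(s)-g_1^{(3)}(s)$ according to the residue of $s$ modulo $6$ and thereby verifying the conjecture for many more ``nice'' values (e.g.\ $4,6,8,10,12,18,24,26,28,30,\dots$); and your remark that for the remaining $s$ the tournament coloring loses only $O(1)$ is not right in general, since $d(6x-1)\ge x$ shows the deficiency is unbounded --- the correct general statement, which the paper proves, is $d(s)=O(s\log s)$, so for arbitrary $s$ this method locates $h_1^{(3)}(s)$ only within an additive $O(s\log s)$ of $s^3/24$.
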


Consider an edge-coloring $c$ of the complete graph on $[N]$ with
colors $I,II,III$ picked uniformly at random. From this coloring,
we get a red-blue coloring $C$ of the triples from $[N]$ as
follows: if
 $a<b<c$ has $(a,b)$ color $I$, $(b,c)$ color
$II$, and $(a,c)$ color $III$, then color $\{a,b,c\}$ red,
otherwise color the triple blue. Since every complete graph of
order $q$ contains $\Theta(q^2)$ edge-disjoint triangles, the
probability that a given set of size $q$ contains only blue
triples is at most $2^{-\Theta(q^2)}$. Thus, it is straightforward
to check that with high probability, in the coloring $C$ the
largest blue set has size $O(\log N)$. Over all edge-colorings of
the complete graph on $[s]$ with colors $I,II,III$, let $F_1(s)$
denote the maximum number of triples $(a,b,c)$ with $1 \leq a<b<c
\leq s$ such that $(a,b)$ is color $I$, $(b,c)$ is color $II$, and
$(a,c)$ is color $III$. Note that in the coloring $C$ we
constructed above whose largest blue set has size $O(\log N)$,
every set of size $s$ has at most $F_1(s)$ red triples. Therefore,
by definition, $h_1^{(3)}\leq F_1(s)+1$. Since also
$h_1^{(3)}(s)>g_1^{(3)}(s)$, it implies that $F_1(s) \geq
 g_1^{(3)}(s)$. Erd\H{o}s and Hajnal conjectured that these two functions are
actually equal, which would imply
$h_1^{(3)}(s)=F_1(s)+1=g_1^{(3)}(s)+1$ and hence Conjecture
\ref{confirst}.

 \begin{conjecture}\label{consecond}
For all positive integers $s$, $F_1(s)=g_1^{(3)}(s)$.
\end{conjecture}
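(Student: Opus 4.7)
The plan is to prove the upper bound $F_1(s) \leq g_1^{(3)}(s)$ by strong induction on $s$, since the lower bound is already established in the paper via the recursive interval construction. Given any $3$-edge-coloring $\chi$ of $K_s$ with colors $I,II,III$, I would aim to produce a partition $[s] = A \sqcup B \sqcup C$ with $|A|+|B|+|C| = s$ such that every good triple (one with $\chi(a,b)=I$, $\chi(b,c)=II$, $\chi(a,c)=III$ for $a<b<c$) either has all three vertices in a single part or else exactly one vertex in each of $A, B, C$. If such a partition exists, the good triples within each part are bounded by $g_1^{(3)}(|A|), g_1^{(3)}(|B|), g_1^{(3)}(|C|)$ by induction, while the number of ``transversal'' good triples is trivially at most $|A|\,|B|\,|C|$. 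Summing and invoking the defining recursion for $g_1^{(3)}(s)$ then yields $F_1(s) \leq g_1^{(3)}(s)$.

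The first thing to try is to partition using the extreme vertex: set $B = \{v > 1 : \chi(1,v) = I\}$, $C = \{v > 1 : \chi(1,v) = III\}$, and let $A = \{1\} \cup \{v > 1 : \chi(1,v) = II\}$. Every good triple containing vertex $1$ automatically has its middle element in $B$ and its top element in $C$, which is exactly the desired transversal shape, and the number of such triples is trivially at most $|B|\,|C|$. The harder job is to control good triples avoiding vertex $1$, since there is no a priori reason they should respect this tripartition. One would then want either to iterate the construction inside each block, or (more likely) to prove a shifting/compression lemma that modifies $\chi$ without decreasing the good-triple count until $\chi$ has the interval form appearing in the lower-bound construction, at which point matching $g_1^{(3)}(s)$ is immediate from the recursion.

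The principal obstacle is the interplay between the linear ordering $a<b<c$ and the three asymmetric roles played by the colors $I, II, III$. Standard compression and shifting arguments thrive on symmetry, and symmetry is precisely what is absent here: swapping two vertices or recoloring a single edge typically both creates and destroys good triples simultaneously, with no obvious monotone potential, and the extremal coloring is highly rigid rather than approximately symmetric. This is consistent with the present paper only resolving Conjecture~\ref{consecond} for infinitely many values of $s$ and confining $h_1^{(3)}(s)$ to a short interval for the remaining $s$. A complete proof would seem to require either a clever double-counting/charging scheme that fractionally distributes each good triple to the three parts of a canonical tripartition, or an entirely new structural theorem characterising the extremal three-colorings and showing that they must be obtained from an interval partition by recursion.
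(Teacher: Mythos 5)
You should first be clear about the status of the statement: Conjecture~\ref{consecond} is an open conjecture of Erd\H{o}s and Hajnal which the paper itself does not prove. The paper only establishes the chain $g_1^{(3)}(s) \leq h_1^{(3)}(s)-1 \leq F_1(s) \leq T(s)$ (the last inequality by orienting $a\to b$ for $a<b$ when $(a,b)$ has color $I$ or $II$, so that every good triple becomes a cyclic triangle), computes $T(s)$ exactly, and then verifies the conjecture only for the ``nice'' values of $s$ with $d(s)=0$ (for instance all powers of $3$), together with the general estimate $d(s)=O(s\log s)$. Your proposal instead aims at the full upper bound $F_1(s)\le g_1^{(3)}(s)$, and it is not a proof but a plan with the decisive step missing. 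The entire burden lies in producing, for an arbitrary coloring $\chi$, a partition $[s]=A\sqcup B\sqcup C$ such that every good triple is either contained in one part or has exactly one vertex in each part; you exhibit no such partition. Your candidate partition by the color of the edges at vertex $1$ only controls good triples through vertex $1$: a good triple avoiding vertex $1$ can have two vertices in one part and one in another, and such a triple is counted neither by the inductive terms $F_1(|A|)$, $F_1(|B|)$, $F_1(|C|)$ nor by $|A|\,|B|\,|C|$, so the recursion defining $g_1^{(3)}$ cannot be invoked. The ``shifting/compression lemma'' you then appeal to is precisely the missing mathematical content: you propose no compression operation and no monotone potential, and, as you yourself observe, the asymmetry between the roles of $I$, $II$, $III$ and the linear order is exactly why no such argument is known. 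In short, the proposal restates the difficulty rather than overcoming it.

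Two smaller points. Your lower-bound direction is fine: the recursive interval construction ($A<B<C$ as intervals, $A$--$B$ edges colored $I$, $B$--$C$ edges colored $II$, $A$--$C$ edges colored $III$, recursing inside each interval) gives $F_1(s)\ge g_1^{(3)}(s)$ directly, whereas the paper deduces the same inequality from $h_1^{(3)}(s)\le F_1(s)+1$ and $h_1^{(3)}(s)>g_1^{(3)}(s)$; either route is acceptable. Also, any successful attack on the upper bound must genuinely use the distinction between colors $I$ and $II$: merging them, as in the passage to tournaments, can only give $F_1(s)\le T(s)$, and by Lemma~\ref{lemmad} one has $T(s)>g_1^{(3)}(s)$ already for $s=5$ and $s=7$, so the relaxed bound is strictly too weak for general $s$. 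This is consistent with the paper resolving the conjecture only for nice $s$, and it is a useful sanity check against any proposed compression scheme: if the scheme is blind to the $I$/$II$ distinction, it cannot work.
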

Erd\H{o}s and Hajnal verified Conjectures \ref{confirst} and
\ref{consecond} for $s \leq 9$.

To attack these conjectures, we use a new function which was not
considered in \cite{EH72}. Let $T(s)$ be the maximum number of
directed triangles in all tournaments on $s$ vertices. It is an
exercise (see, e.g., \cite{L07}) to check that every tournament
with $n$ vertices of outdegrees $d_1,\ldots,d_n$ has exactly ${n
\choose 3}-\sum_{i=1}^n {d_i \choose 2}$ cyclic triangles. This
number is maximized when all the $d_i$ are as equal as possible,
that is, if $n$ is odd, $d_i=\frac{n-1}{2}$ for all $i$ and, if
$n$ is even, half of the $d_i$ are $\frac{n-2}{2}$ and the other
half are $\frac{n}{2}$. It is easy to see that there is a
tournament with such outdegrees, and therefore we have the
following formula for $T(s)$:
\begin{equation}
\label{eqtwo} T(s)=\left\{\begin{array}{ll}
{\frac{(s+1)s(s-1)}{24}}&\mbox{ if $s$ is odd}\\
{\frac{(s+2)s(s-2)}{24}}&\mbox{ if $s$ is even.}
\end{array}\right.
\end{equation}

It appears that $T(s)$ and $F_1(s)$ are closely related. Indeed,
given an edge-coloring of the complete graph on $[s]$ with colors
$I,II,III$, construct the following tournament on $[s]$. If
$(a,b)$ with $a<b$ is color $I$ or $II$, then direct the edge from
$a$ to $b$ and otherwise direct the edge from $b$ to $a$. Note
that any triple $(a,b,c)$ with $a<b<c$ and $(a,b)$ color $I$,
$(b,c)$ color $II$, and $(a,c)$ color $III$ makes a cyclic
triangle in our tournament. We therefore have $F_1(s) \leq T(s)$.
Let us summarize the inequalities we have seen so far:
\begin{equation}\label{eqthree} g_1^{(3)}(s) \leq h_1^{(3)}(s)-1 \leq F_1(s)
\leq T(s) .\end{equation}

Let $d(s)=g_1^{(3)}(s)-T(s)$. We have $d(s)=0$ if and only if all
the inequalities in (\ref{eqthree}) are equalities. We call such a
number $s$ {\it nice}. Note that Conjectures \ref{confirst} and
 \ref{consecond} necessarily hold in the case $s$ is nice.
Using this fact, we next find infinitely many values of $s$ for
which Conjectures \ref{confirst} and \ref{consecond} hold.

\begin{proposition} If $s$ is a power of $3$, then
$$g_1^{(3)}(s) = h_1^{(3)}(s)-1 = F_1(s) = T(s)=\frac{1}{4}{s+1
\choose 3}.$$
\end{proposition}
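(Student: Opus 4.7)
The plan is to exploit the chain of inequalities (\ref{eqthree}),
\[
g_1^{(3)}(s) \leq h_1^{(3)}(s)-1 \leq F_1(s) \leq T(s),
\]
already established in the excerpt. To collapse all four quantities to a single value it is enough to prove the matching lower bound $g_1^{(3)}(s) \geq T(s)$, together with the explicit evaluation $T(s) = \tfrac{1}{4}\binom{s+1}{3}$.

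First I would evaluate $T(s)$ when $s = 3^k$. Since any power of $3$ is odd, formula (\ref{eqtwo}) gives directly
\[
T(3^k) = \frac{(s+1)s(s-1)}{24} = \frac{1}{4}\binom{s+1}{3}.
\]
Next I would establish $g_1^{(3)}(3^k) \geq T(3^k)$ by induction on $k$, the point being that when $s$ is a power of $3$ the balanced three-way split required by the recursion for $g_1^{(3)}$ is available in the cleanest possible form. The base case $k=0$ is immediate since $g_1^{(3)}(1) = 0 = T(1)$. For the inductive step, the recursion specialised to $a = b = c = 3^{k-1}$ gives
\[
g_1^{(3)}(3^k) \;\geq\; 3\, g_1^{(3)}(3^{k-1}) \;+\; (3^{k-1})^3,
\]
and substituting the inductive value $g_1^{(3)}(3^{k-1}) = \tfrac{(3^{k-1}+1)3^{k-1}(3^{k-1}-1)}{24}$ and setting $m = 3^{k-1}$ reduces the claim to the identity
\[
\frac{3m(m^2-1)}{24} + m^3 \;=\; \frac{3m(9m^2-1)}{24} \;=\; \frac{(3m+1)(3m)(3m-1)}{24},
\]
which is a one-line algebraic verification.

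Combining the lower bound $g_1^{(3)}(3^k) \geq T(3^k)$ with the upper bound $g_1^{(3)}(s) \leq T(s)$ from (\ref{eqthree}), every inequality in the chain becomes an equality, so
\[
g_1^{(3)}(s) \;=\; h_1^{(3)}(s)-1 \;=\; F_1(s) \;=\; T(s) \;=\; \tfrac{1}{4}\binom{s+1}{3},
\]
as required. There is essentially no obstacle here: the only content of the argument is that powers of $3$ are exactly the integers for which the recursive upper bound on $g_1^{(3)}$ and the outdegree-balanced lower bound on $T$ coincide term by term, and the routine computation above witnesses this. The same method will not work verbatim for general $s$ because unbalanced splits in the recursion force $g_1^{(3)}(s) < T(s)$, but the power-of-$3$ case already yields an infinite family of nice values confirming Conjectures \ref{confirst} and \ref{consecond}.
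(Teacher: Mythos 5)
Your proof is correct and is essentially the paper's argument: the paper also inducts through powers of $3$, using the balanced split in the recursion for $g_1^{(3)}$ and the odd-case formula for $T$ to show the deficiency $d(s)=g_1^{(3)}(s)-T(s)$ satisfies $d(3s)=3d(s)$, which together with the chain (\ref{eqthree}) collapses everything. The only (cosmetic) difference is that you use the balanced split merely as a lower bound $g_1^{(3)}(3m)\geq 3g_1^{(3)}(m)+m^3$ rather than invoking the exact recursion, which makes your write-up marginally more self-contained but is the same computation.
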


\noindent {\bf Proof.}\hspace{1mm} We easily see that $s=1$ is
nice. By induction, the proposition follows from checking that if
$s$ is odd and nice, then so is $3s$. Since, by definition,
$g_1^{(3)}(3s)=s^3+3g_1^{(3)}(s)$, we indeed have $$\hspace{1.1cm}
d(3s)=T(3s)-g_1^{(3)}(3s)=\frac{1}{4}{3s+1 \choose
3}-3g_1^{(3)}(s)-s^3=\frac{3}{4}{s+1 \choose
3}-3g_1^{(3)}(s)=3d(s). \hspace{1.1cm} \Box $$

The computation in the proof of the proposition above shows that
if $s=6x+3$ with $x$ a nonnegative integer, then $d(s)=3d(2x+1)$.
One can check the other cases of $s \pmod 6$ rather easily.

\begin{lemma}\label{lemmad}
If $x$ is a positive integer, then
\begin{eqnarray*} d(6x-2)& = & 2d(2x-1)+d(2x), \\ d(6x-1) & = &
d(2x-1)+2d(2x)+x, \\ d(6x) & = & 3d(2x), \\ d(6x+1) & = &
2d(2x)+d(2x+1)+x, \\ d(6x+2) & = & d(2x)+2d(2x+1),
\\ d(6x+3)& = & 3d(2x+1).\end{eqnarray*}
\end{lemma}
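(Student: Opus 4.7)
The plan is to reduce each of the six identities to a short polynomial identity in the integer $x$. The two tools are already in hand: the recurrence
\[g_1^{(3)}(s)=g_1^{(3)}(a)+g_1^{(3)}(b)+g_1^{(3)}(c)+abc,\]
where $(a,b,c)$ is the balanced partition of $s$ (parts as nearly equal as possible), and the closed-form expression (\ref{eqtwo}) for $T(s)$. Writing $d(m)=T(m)-g_1^{(3)}(m)$, substituting the recurrence for $g_1^{(3)}(s)$ and rearranging yields
\[d(s)\;=\;\bigl[T(s)-T(a)-T(b)-T(c)-abc\bigr]\;+\;d(a)+d(b)+d(c).\]
So each line of the lemma is equivalent to the claim that the bracketed polynomial correction in $x$ equals the constant ($0$ or $x$) appearing in that line.

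First I would write down the balanced partition in each residue class modulo $6$: $6x=(2x,2x,2x)$; $6x+1=(2x,2x,2x+1)$; $6x+2=(2x,2x+1,2x+1)$; $6x+3=(2x+1,2x+1,2x+1)$; and symmetrically $6x-1=(2x-1,2x,2x)$, $6x-2=(2x-1,2x-1,2x)$. Next I would read off the six values of $T(s)$ from (\ref{eqtwo}); since $6x$, $6x\pm 2$ are even and $6x\pm 1$, $6x\pm 3$ are odd, each is a cubic in $x$ (for instance $T(6x)=x(9x^2-1)$ and $T(6x+3)=\tfrac{1}{2}(2x+1)(3x+1)(3x+2)$). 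Likewise $T(2x)=\tfrac{1}{3}x(x^2-1)$, $T(2x\pm 1)=\tfrac{1}{6}x(2x\pm 1)(x\pm 1)$.

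It then remains to expand and compare. In the three ``symmetric'' cases $s=6x$, $6x+3$, and in the two cases $s=6x\pm 2$, the correction $T(s)-T(a)-T(b)-T(c)-abc$ simplifies to $0$; for example the $s=6x+3$ case is the identity $(2x+1)\bigl[(3x+1)(3x+2)-x(x+1)\bigr]=2(2x+1)^3$, which reduces to the observation $(3x+1)(3x+2)-x(x+1)=8x^2+8x+2=2(2x+1)^2$. In the two remaining cases $s=6x\pm 1$, the same expansion produces a linear residual equal to $+x$, which is precisely the extra term in the recurrences for $d(6x+1)$ and $d(6x-1)$. All six verifications are short algebraic identities in $x$ over $\mathbb{Q}$ and can be done uniformly on a common denominator of $6$.

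There is no conceptual obstacle: the one non-computational point is that the maximum in the definition of $g_1^{(3)}(s)$ is attained at the balanced partition, but this has already been asserted in the paragraph introducing $g_1^{(3)}$, so we may take it for granted. The only real work is keeping the bookkeeping straight across the six residue classes, which is a finite check.
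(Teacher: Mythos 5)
Your proposal is correct and is essentially the paper's intended argument: the paper proves only the $s=6x+3$ case explicitly (in the proposition on powers of $3$) and leaves the remaining residues mod $6$ as a routine check, which is exactly the computation you carry out via $d(s)=\bigl[T(s)-T(a)-T(b)-T(c)-abc\bigr]+d(a)+d(b)+d(c)$ with the balanced partition; your case analysis and the placement of the $+x$ terms at $s=6x\pm 1$ all verify. (Note your convention $d=T-g_1^{(3)}$ matches the computation and the lemma's signs, even though the paper's displayed definition has the opposite sign.)
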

Note that from this lemma, we can easily determine which
 values of $s$ are nice. In particular, the nice
positive integers up to 100 are
$$1,2,3,4,6,8,9,10,12,18,24,26,27,28,30,36,54,72,78,80,81,82,84,90.$$

Also, from Lemma \ref{lemmad}, we can easily prove an upper bound
on $d(s)$.
\begin{proposition}
For all positive integers $s$, $d(s)=O(s \log s)$.
\end{proposition}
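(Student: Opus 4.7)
The plan is to prove $d(s) \le C s \log s$ for a sufficiently large absolute constant $C$, by strong induction on $s$, using Lemma \ref{lemmad} directly. The key structural observation is that in every one of the six cases of Lemma \ref{lemmad}, the value $d(s)$ for $s \in \{6x-2,6x-1,6x,6x+1,6x+2,6x+3\}$ is expressed as a nonnegative combination of $d(2x-1), d(2x), d(2x+1)$ whose coefficients sum to $3$, plus an additive term that is either $0$ or $x$. In particular, the largest argument appearing on the right-hand side is at most $2x+1 \le s/3 + 1$, and the additive error is at most $x \le s/6 + 1$. This is exactly the shape of a divide-and-conquer recurrence whose master-theorem solution is $O(s\log s)$, since $\log 3 > 1$.

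First I would handle the base case by taking $C$ large enough that $d(s) \le C s \log s$ holds for all $s$ below some threshold $s_0$; since $d$ is bounded on any finite initial segment, this is automatic. For the inductive step, fix $s \ge s_0$, write $s = 6x + r$ with $r \in \{-2,-1,0,1,2,3\}$, so that $x = (s-r)/6$, and apply the appropriate line of Lemma \ref{lemmad}. Bounding each $d(2x-1), d(2x), d(2x+1)$ by $C(2x+1)\log(2x+1)$ via the inductive hypothesis and using the trivial estimate $x \le s/6 + 1$, I obtain
\[
d(s) \;\le\; 3C(2x+1)\log(2x+1) \,+\, x \;\le\; C(s+3)\log(s/3+1) + s/6 + O(1).
\]

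Expanding $\log(s/3+1) = \log s - \log 3 + O(1/s)$, the right-hand side becomes
\[
Cs\log s \,-\, (C\log 3)\,s \,+\, O(Cs) \,+\, O(C \log s) \,+\, \tfrac{s}{6} + O(1).
\]
Because $\log 3 > 1$, choosing $C$ large enough that $C(\log 3 - 1)$ exceeds all the implied absolute constants, and choosing $s_0$ correspondingly large so the $O(\log s)$ terms are dominated, this quantity is at most $Cs \log s$, closing the induction.

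The only potentially delicate point is ensuring the boundary effects do not spoil the argument: the argument $2x+1$ is slightly larger than $s/3$, and Lemma \ref{lemmad} contributes an additive linear term $x$ in three of the six cases. However, the \emph{positive} margin $\log 3 - 1 > 0$ between the splitting factor $3$ and the argument contraction factor $3$ gives a linear saving $(\log 3 - 1)Cs$ at each level, which comfortably absorbs both the $O(s)$ additive error and the $O(C \log s)$ rounding loss. Hence there is no real obstacle, and the bound $d(s) = O(s \log s)$ follows.
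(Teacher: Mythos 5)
Your proof is correct and is essentially the paper's own argument: the paper likewise proceeds by induction on $s$ through the six cases of Lemma \ref{lemmad}, setting $D(s)=d(s)-cs\log s$ and using the margin $\log\frac{6x+1}{2x+1}\approx\log 3$ coming from the argument contraction to absorb the additive term $x$, exactly as you do. One cosmetic remark: your expansion actually produces no genuine $O(Cs)$ term (the error is only $O(C\log s)+s/6+O(C)$), so the appeal to $\log 3>1$ is not needed --- any positive margin together with a large $C$ suffices --- though since logarithms here are natural the claim is harmless.
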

\begin{proof}
Let $D(s)=d(s)-cs \log s$ with $c$ a sufficiently large constant.
Using induction on $s$ and the recursive formula for $d(s)$
depending on $s \pmod 6$ in Lemma \ref{lemmad}, we get that $D(s)$
is negative for $s>1$. Indeed, assuming $s=6x+1$ with $x$ a
positive integer (the other five cases are handled similarly), we
get
\begin{eqnarray*} D(s) & = & d(6x+1)-c(6x+1)\log
(6x+1)=2d(2x)+d(2x+1)+x-c(6x+1)\log (6x+1) \\ & < &
2d(2x)+d(2x+1)-c(6x+1)\log (2x+1) <
2D(2x)+D(2x+1)<0.\end{eqnarray*}
\end{proof}

\noindent The above proposition demonstrates that $T(s)$ and
$g_1^{(3)}(s)$, which are cubic in $s$, are always fairly close
together. Therefore, using (\ref{eqtwo}), we have that
$h_1^{(3)}(s)$ always lies in an interval of length $O(s \log s)$
around $s^3/24$.

In their attempt to determine $h_1^{(3)}(s)$, Erd\H{o}s and Hajnal
consider yet another function. Consider a coloring of the edges of
the complete graph on $s$ vertices labeled $1,\ldots,s$ by two
colors I and II which maximizes the number of triangles $(a,b,c)$
with $1\leq a<b<c \leq s$ such that $(a,b)$ and $(b,c)$ has color
I, and $(a,c)$ has color II. Denote this maximum by $F_2(s)$.
Trivially, $F_2(s) \geq F_1(s)$. Erd\H{o}s and Hajnal thought that
``perhaps $F_2(s)=F_1(s)$''. As we will show, this is indeed the
case for some values of $s$, but is not true in general. For
example, it is false already for $s=5$ and $s=7$. Moreover, we
precisely determine the
 $F_2(s)$ for all values of $s$.

\begin{lemma}
For all positive integers $s$, $F_2(s)=T(s)$.
\end{lemma}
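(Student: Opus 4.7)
The plan is to establish both inequalities $F_2(s) \leq T(s)$ and $F_2(s) \geq T(s)$.

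The upper bound $F_2(s) \leq T(s)$ follows by the same tournament construction used for $F_1$ in the paragraph preceding the lemma. Given any edge-coloring of $K_s$ attaining $F_2(s)$, I orient the edge $\{a,b\}$ with $a<b$ as $a \to b$ when it has color I and as $b \to a$ when it has color II. Any triple $(a,b,c)$ with $a<b<c$ counted by $F_2$ then yields a cyclic triangle $a \to b \to c \to a$ in the resulting tournament, and distinct triples give distinct cyclic triangles, so $F_2(s)$ is at most the number of cyclic triangles in this tournament, which is at most $T(s)$.

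For the matching lower bound, I propose the ``interval'' tournament $D$ on $[s]$ defined by $i \to j$ (for $i<j$) if and only if $j-i \leq h$, where $h = \lfloor s/2 \rfloor$. A direct outdegree count shows that $D$ is regular of outdegree $h$ when $s$ is odd, and that when $s$ is even its outdegree sequence consists of $s/2$ values equal to $s/2$ together with $s/2$ values equal to $s/2-1$. Plugging this into the standard identity $\binom{s}{3} - \sum_i \binom{d_i^+}{2}$ for the number of cyclic triangles produces exactly $T(s)$ in either parity, matching the closed form (\ref{eqtwo}).

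The crux is to verify that \emph{every} cyclic triangle of $D$, with its three vertices relabeled as $a<b<c$, has the ``forward'' orientation $a\to b\to c\to a$, since this is precisely what makes it an $F_2$-triple when $D$ is read back as a coloring (color I for $a\to b$ with $a<b$, color II otherwise). Writing $x=b-a$ and $y=c-b$, the three edges satisfy $a\to b \Leftrightarrow x\leq h$, $b\to c \Leftrightarrow y\leq h$, and $a\to c \Leftrightarrow x+y\leq h$. A short case analysis on the four possible sign patterns of $x-h$ and $y-h$ shows that the only configuration producing a $3$-cycle is $x\leq h$, $y\leq h$, $x+y>h$, which gives precisely the forward orientation; the reverse orientation would simultaneously require $x>h$, $y>h$, and $x+y\leq h$, impossible because $x,y\geq h+1$ forces $x+y\geq 2h+2>h$. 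Hence reading $D$ back as a coloring turns every cyclic triangle of $D$ into an $F_2$-triple, giving $F_2(s) \geq T(s)$.

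I expect the main obstacle to be simply identifying the right tournament, since once $D$ is singled out both the count of its cyclic triangles and the forward-orientation claim reduce to short, essentially bijective computations. The parity-dependent arithmetic when comparing the outdegree-sum formula to the closed form in (\ref{eqtwo}) requires some bookkeeping but introduces no conceptual difficulty.
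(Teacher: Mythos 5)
Your proof is correct. The upper bound $F_2(s)\leq T(s)$ is identical to the paper's (orient color-I edges upward, color-II edges downward, and observe that each $F_2$-triple becomes a cyclic triangle). For the lower bound, however, you take a genuinely different route: the paper exhibits the parity coloring in which $(a,b)$ gets color II exactly when $b-a$ is even, and then counts the $F_2$-triples directly by summing $\lfloor (c-a)/2\rfloor$ over pairs with $c-a$ even, matching the closed form (\ref{eqtwo}). You instead start from an explicit near-regular tournament, the interval tournament $D$ with $i\to j$ iff $0<j-i\leq \lfloor s/2\rfloor$, compute its number of cyclic triangles via the identity $\binom{s}{3}-\sum_i\binom{d_i^+}{2}$ (your outdegree bookkeeping and the resulting arithmetic do give exactly $T(s)$ in both parities), and then make the key observation that every cyclic triangle of $D$ is ``forward'' ($a\to b\to c\to a$ with $a<b<c$), since the reverse orientation would force $x,y>h$ yet $x+y\leq h$. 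Reading $D$ back as a coloring then converts all $T(s)$ cyclic triangles into $F_2$-triples. The two constructions are different colorings (e.g.\ they already disagree on the edge $(1,4)$ for $s=5$), and your argument trades the paper's direct enumeration for the outdegree identity plus the forward-orientation claim; in fact the same forward-orientation phenomenon holds implicitly for the paper's parity coloring, so your proof isolates the structural reason the lower bound works, at the cost of a slightly longer verification.
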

\begin{proof} We first show that $T(s) \geq F_2(s)$. Indeed, from a two
coloring with colors I and II of the edges of the complete graph
with vertices $1,\ldots,s$ we get a tournament on $s$ vertices as
follows: if $(a,b)$ with $a<b$ is color I, then orient the edge
from $a$ to $b$, otherwise $(a,b)$ is color II and orient the edge
from $b$ to $a$. Any triangle $(a,b,c)$ with $a<b<c$ with $(a,b)$
and $(b,c)$ color I and $(a,c)$ color II is a cyclic triangle in
the tournament, and the inequality $T(s) \geq F_2(s)$ follows.

We next show that actually $T(s)=F_2(s)$. Consider the two
coloring of the edges of the complete graph on $s$ vertices where
$(a,b)$ is color II if and only if $b-a$ is even. A simple
calculation shows that the number of triangles $(a,b,c)$ with
$a<b<c$ with $(a,b)$ and $(b,c)$ color I and $(a,c)$ color II in
this coloring is precisely the formula (\ref{eqtwo}) for $T(s)$.
Assume $s$ is even (the case $s$ is odd can be treated similarly).
For fixed $a$ and $c$ with $c-a$ even, the number of such
triangles containing edge $(a,c)$ is $\lfloor \frac{c-a}{2}
\rfloor$. Letting $c=a+2i$, we thus have $$F_2(s) \geq
\sum_{a=1}^s\sum_{1 \leq i \leq \lfloor \frac{s-a}{2}
\rfloor}i=\sum_{a=1}^s {\lfloor \frac{s-a}{2} \rfloor+1 \choose
2}=\sum_{j=1}^{s/2}2{j \choose 2}=2{\frac{s}{2}+1 \choose
3}=T(s),$$ and hence $F_2(s)=T(s)$.
\end{proof}
\section{Odds and ends}\label{oddsandends}

\subsection{Polynomial versus Exponential Ramsey numbers}

As we discussed in Section \ref{offdiagonalsection}, the Ramsey
number of $K_4^{(3)}\setminus e$ versus $K_n^{(3)}$ is at least
exponential in $n$. The hypergraph $K_4^{(3)}\setminus e$ is a
special case of the following construction. Given an arbitrary
graph $G$, let $H_G$ be the $3$-uniform hypergraph whose vertices
are the vertices of $G$ plus an auxiliary vertex $v$. The edges of
$H_G$ are all triples obtained by taking the union of an edge of
$G$ with vertex $v$. For example, by taking $G$ to be the
triangle, we obtain $K_4^{(3)}\setminus e$. It appears that the
Ramsey numbers $r\big(H_G,K_n^{(3)}\big)$ have a very different
behavior depending on the bipartiteness of $G$.

\begin{proposition}

If $G$ is a bipartite graph, then there is a constant $c=c(G)$
such that $r(H_G,K_n^{(3)}) \leq n^{c}$. On the other hand, for
non-bipartite $G$, $r(H_G,K_n^{(3)}) \geq 2^{c'n}$ for an absolute
constant $c'>0$.
\end{proposition}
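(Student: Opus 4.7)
The plan is to handle the two directions separately. For the bipartite upper bound I combine a link-density argument with a simple first-moment calculation, and for the non-bipartite lower bound I revisit the tournament construction already used for $K_4^{(3)}\setminus e = H_{K_3}$ and observe that it actually forbids $H_G$ for \emph{every} non-bipartite $G$.

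For the bipartite case, let $a\leq b$ be the sizes of the two parts of $G$ and set $N=Cn^{3a}$ for a sufficiently large constant $C=C(G)$. Given any red-blue coloring of $\binom{[N]}{3}$, associate to each vertex $v$ its red link $L_v$, the graph on $[N]\setminus\{v\}$ whose edges are the pairs $\{x,y\}$ with $\{v,x,y\}$ red; a red copy of $H_G$ with apex $v$ is exactly a copy of $G$ in $L_v$. Since $G\subseteq K_{a,b}$, the K\H{o}v\'ari--S\'os--Tur\'an theorem implies that either some $L_v$ contains a copy of $K_{a,b}$ (and hence of $G$, giving a red $H_G$) or every $L_v$ has at most $O_G(N^{2-1/a})$ edges. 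In the latter case, double counting gives at most $O_G(N^{3-1/a})$ red triples in total, so a uniformly random $n$-subset of $[N]$ contains in expectation $O_G(n^3/N^{1/a})=o(1)$ red triples by our choice of $N$, and the first moment method produces an $n$-subset with no red triple, i.e.\ a blue $K_n^{(3)}$.

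For the non-bipartite case, set $N=2^{c'n}$ for some constant $c'<1/2$, take a uniformly random tournament $T$ on $[N]$, and color a triple red if and only if the induced triangle in $T$ is cyclic (and blue otherwise). The key observation is that for every vertex $v$ the red link $L_v$ is bipartite: partitioning $[N]\setminus\{v\}$ into the out-neighbors $A_v$ and the in-neighbors $B_v$ of $v$, any pair $\{x,y\}\subseteq A_v$ makes $v$ a source in $\{v,x,y\}$ and any pair $\{x,y\}\subseteq B_v$ makes $v$ a sink, so the triangle is cyclic only when $\{x,y\}$ crosses between $A_v$ and $B_v$. Since $G$ is non-bipartite, no $L_v$ contains a copy of $G$, so no red $H_G$ ever appears, regardless of $T$. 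On the blue side, a blue $K_n^{(3)}$ corresponds exactly to a transitive sub-tournament of order $n$ in $T$, and the expected number of such transitive $n$-subtournaments is $\binom{N}{n}n!\,2^{-\binom{n}{2}}\leq N^n\,2^{-\binom{n}{2}}<1$ whenever $c'<1/2$ and $n$ is large, so with positive probability the resulting coloring has no blue $K_n^{(3)}$ either, giving $r(H_G,K_n^{(3)})>2^{c'n}$.

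The main subtlety lies in the lower bound: one needs to notice that the tournament link is bipartite, which is what makes the $K_4^{(3)}\setminus e$ construction extend uniformly to every non-bipartite $G$. Once this structural observation is in hand, both directions reduce to standard K\H{o}v\'ari--S\'os--Tur\'an and first-moment estimates.
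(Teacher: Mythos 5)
Your proof is correct and follows essentially the same route as the paper: the K\H{o}v\'ari--S\'os--Tur\'an bound applied to the red link (equivalently, the red degree) of a vertex for the bipartite upper bound, and the cyclic-triangle coloring of a random tournament for the non-bipartite lower bound, where your observation that each red link is bipartite with respect to in- and out-neighbors is the same fact the paper phrases as a proper $2$-coloring of the odd cycle in $G$. The only small deviation is in extracting the blue $K_n^{(3)}$: you use a first-moment argument over a random $n$-subset (fine, since the expected count is an integer-valued variable made less than $1$ by the choice of $C$), whereas the paper invokes the deterministic Tur\'an-type independent-set bound for sparse $3$-uniform hypergraphs.
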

\begin{proof}
Let $G$ be a bipartite graph with $t$ vertices. The classical
result of K\"ovari, S\'os, and Tur\'an \cite{KST54} states that a
graph with $N$ vertices and at least $N^{2-1/t}$ edges contains
the complete bipartite graph $K_{t,t}$ with two parts of size $t$.
Therefore, any $3$-uniform hypergraph of order $N$ which contains
a vertex of degree at least $N^{2-1/t}$ contains also a copy of
$H_{K_{t,t}}$ and hence also $H_G$. Consider a red-blue
edge-coloring $C$ of the complete $3$-uniform hypergraph on
$N=(3n)^{2t}$ vertices, and let $m$ denote the number of red edges
in $C$. If $m \geq N^{3-1/t}$, then there is a vertex whose red
degree is at least $3m/N \geq N^{2-1/t}$, which by the above
remark gives a red copy of $H_G$. Otherwise, $m< N^{3-1/t}$ and we
can use a well known Tur\'an-type bound to find a large blue set
in coloring $C$. Indeed, it is well known (see, e.g., Chapter 3,
Exercise 3 in \cite{AS00}) that a $3$-uniform hypergraph with $N$
vertices and $m \geq N$ edges has an independent set (i.e., set
with no edges) of size at least $\frac{N^{3/2}}{3m^{1/2}}$. Thus,
the hypergraph of red edges has an independent set of size at
least $$\frac{N^{3/2}}{3m^{1/2}} >
\frac{N^{3/2}}{3(N^{3-1/t})^{1/2}}=\frac{1}{3}N^{1/(2t)} =n,$$
which clearly is a blue set.

To prove the second part of this proposition, we use a
construction of Erd\H{o}s and Hajnal mentioned in the
introduction. Suppose that $G$ is not bipartite, so it contains an
odd cycle with vertices $\{v_1,\ldots,v_{2i+1}\}$ and edges
$\{v_j,v_{j+1}\}$ for $1 \leq j \leq 2i+1$, where $v_{2i+2}:=v_1$.
We start with a tournament $T$ on $[N]$ with $N=2^{c'n}$ which
contains no transitive tournament of order $n$. As we already
mentioned, for sufficiently small $c'$, a random tournament has
this property with high probability. Color the triples from $[N]$
red if they form a cyclic triangle in $T$ and blue otherwise.
Clearly, this coloring does not contain a blue set of size $n$.
Suppose it contains a red copy of $H_G$. This implies that $T$
contains $2i+2$ vertices $v,u_1,\ldots,u_{2i+1}$ such that all the
triples $(v,u_j,u_{j+1})$ form a cyclic triangle. Then, the edges
$(v,u_j)$ and $(v,u_{j+1})$ have opposite orientation (one edge
oriented towards $v$ and the other oriented from $v$). Coloring
the vertices $u_j$ by $0$ or $1$ depending on the direction of
edge $(v,u_j)$ gives a proper $2$-coloring of an odd cycle,
contradiction.
\end{proof}

\subsection{Discrepancy in hypergraphs}
Despite the fact that Erd\H{o}s \cite{E90} (see also the book
\cite{CG98}) believed $r_3(n,n)$ is closer to $2^{2^{cn}}$,
together with Hajnal \cite{EH89} they discovered the following
interesting fact about hypergraphs which maybe indicates the
opposite. They proved that there are $c,\epsilon>0$ such that
every $2$-coloring of the triples of an $N$-set contains a set of
size $s> c(\log N)^{1/2}$ which contains at least
$(1/2+\epsilon){s \choose 3}$ $3$-sets in one color. That is, the
set of size $s$ deviates from having density $1/2$ in each color
by at least some fixed positive constant. Erd\H{o}s further
remarks that he would begin to doubt that $r_3(n,n)$ is
double-exponential in $n$ if one can prove that in any
$2$-coloring of the triples of the $N$-set, contains some set of
size $s=c(\eta)(\log N)^{\epsilon}$ for which at least $(1-\eta){s
\choose 3}$ triples have the same color. We prove the following
result, which demonstrates this if we allow $\epsilon$ to decrease
with $\eta$.

\begin{theorem} \label{highdensity} For $\eta>0$ and all positive integers $r$
and $k$, there is a constant $\beta=\beta(r,k,\eta)>0$ such that
every $r$-coloring of the $k$-tuples of an $N$-element set has a
subset of size $s> (\log N)^{\beta}$ which contains more than
$(1-\eta){s \choose k}$ $k$-sets in one color.
\end{theorem}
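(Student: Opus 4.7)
The plan is to prove Theorem \ref{highdensity} by induction on the uniformity $k$. The cases $k=1$ and $k=2$ are immediate: pigeonhole, respectively the classical $r$-colour Ramsey theorem, yields a fully monochromatic subset of size $\lceil N/r\rceil$, respectively $\Omega(\log_r N)$. Since these subsets are monochromatic, they trivially satisfy the $(1-\eta)$-density requirement, so one may take $\beta_1(r,\eta)=\beta_2(r,\eta)=1$.

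For the inductive step $k\geq 3$, given an $r$-colouring $\chi$ of $\binom{[N]}{k}$, the strategy is a greedy construction in the spirit of Erd\H{o}s--Rado, but where the classical pigeonhole refinement is replaced by the inductive hypothesis. Concretely, I would build a sequence of vertices $v_1,\ldots,v_M$ together with a chain $[N]=S_0\supset S_1\supset\cdots\supset S_M$, with $v_i\in S_{i-1}$ and $S_i\subseteq S_{i-1}\setminus\{v_i\}$, so that the link $\chi_{v_i}$---an $r$-colouring of the $(k-1)$-subsets of $S_{i-1}\setminus\{v_i\}$---admits, after restriction to $S_i$, a distinguished colour $c_i$ of density at least $1-\eta'$, where $\eta':=\eta/(3k)$. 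Each $S_i$ is produced by applying the inductive hypothesis for $(k-1)$-uniform $r$-colourings to $\chi_{v_i}$, giving $|S_i|\geq (\log|S_{i-1}|)^{\beta_{k-1}(r,\eta')}$ and the associated colour $c_i$.

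After $M$ iterations I pigeonhole over the $r$ possible values of the $c_i$ to extract a subset $V^*\subseteq\{v_1,\ldots,v_M\}$ of size at least $M/r$ sharing a common colour $c^*$, and set $T=V^*\cup S_M$. Organising the $k$-subsets of $T$ by the smallest-indexed $V^*$-vertex they contain, a standard layered count shows that those containing some $v_i\in V^*$ are $(1-\eta')$-dense in colour $c^*$ (the other $k-1$ elements lie in $T\setminus\{v_i\}\subseteq S_i$, where the $c^*$-density of the link $\chi_{v_i}$ is guaranteed). The only uncontrolled $k$-subsets are those entirely in $S_M$, and if $M$ is chosen large enough that $|S_M|\leq (\eta/(2k))^{1/k}|V^*|$ their fraction is at most $\eta/2$, yielding an overall $c^*$-density of at least $1-\eta$ in $T$.

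The principal obstacle---where the genuine quantitative work of the proof lives---is controlling the shrinkage of $|S_i|$. Replacing $|S|$ by $(\log|S|)^{\beta_{k-1}}$ at every round forces $|S_M|$ down to an iterated logarithm of $N$ after only $M=O(\log^{\ast}N)$ rounds, capping $|T|$ at around $\log^{\ast}N$, far smaller than the target $(\log N)^{\beta}$. Overcoming this seems to require performing only a bounded (in $k$, $r$, $\eta$) number of refinement rounds and then invoking, on the partially-structured set, a direct extraction step---for instance a dependent random choice or a supersaturation-style argument---that in a single stroke produces the required $(\log N)^{\beta}$ vertices with high density in some colour. Pinning down this last step, and hence the positive constant $\beta(r,k,\eta)$, is the true combinatorial content; the Erd\H{o}s--Rado-style scaffolding above is only the setup.
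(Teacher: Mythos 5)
Your proposal has a genuine gap, and you name it yourself: the Erd\H{o}s--Rado-style greedy with the inductive hypothesis applied to links replaces $|S_{i-1}|$ by roughly $(\log|S_{i-1}|)^{\beta_{k-1}}$ at every round, so after $M$ rounds the surviving set has size comparable to an $M$-fold iterated logarithm of $N$; the construction can therefore supply only about $\log^{*}N$ usable vertices, and the final set $T$ is nowhere near the required $(\log N)^{\beta}$. The ``direct extraction step'' you defer to is not a finishing touch --- it is the entire proof, and nothing in the scaffolding you set up (links, induction on $k$, vertex-by-vertex refinement) contributes to it or is used in the paper's argument. As written, the proposal proves only an iterated-logarithm-type bound, which is strictly weaker than the statement.

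For comparison, here is what the paper actually does (it deduces Theorem \ref{highdensity} from Theorem \ref{thmF}, a bound on the $r$-colour Ramsey number of the blow-up $K^{(k)}_{\ell}(n)$). Fix $\ell$ with $\binom{k}{2}/\ell<\eta$ and let $t=r(K^{(k)}_{\ell};r)$, a constant depending only on $r,k,\ell$. Every $t$-element subset of $[N]$ contains a monochromatic $\ell$-set, and each $\ell$-set lies in $\binom{N-\ell}{t-\ell}$ such subsets, so there are at least $\binom{t}{\ell}^{-1}\binom{N}{\ell}$ monochromatic $\ell$-sets; pigeonholing over colours gives a colour $i$ and an auxiliary $\ell$-uniform hypergraph $G$ on $[N]$, whose edges are the colour-$i$ monochromatic $\ell$-sets, with edge density at least $\frac{1}{r}\binom{t}{\ell}^{-1}$. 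The classical extremal theorem of Erd\H{o}s on complete $\ell$-partite subhypergraphs of dense $\ell$-uniform hypergraphs then produces, inside $G$, a complete $\ell$-partite system with parts of size $n=\Omega\bigl((\log N)^{1/(\ell-1)}\bigr)$. Every transversal $\ell$-set of these parts is monochromatic in colour $i$, hence every $k$-set meeting $k$ distinct parts has colour $i$; so the union of the parts is a set of size $s=\ell n>(\log N)^{\beta}$ in which all but at most $\bigl(\binom{k}{2}/\ell\bigr)\binom{s}{k}<\eta\binom{s}{k}$ of the $k$-sets get colour $i$. This counting-plus-multipartite-extraction argument is precisely the ``supersaturation-style'' step you conjectured might exist; to complete your write-up you would need to supply it (or an equivalent), and once you have it the greedy construction becomes unnecessary.
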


These results can be conveniently restated in terms of another
function introduced by Erd\H{o}s in \cite{E90}. Denote by
$F^{(k)}(N,\alpha)$ the largest integer for which it is possible
to split the $k$-tuples of a $N$-element set $S$ into two classes
so that for every $X \subset S$ with $|X| \geq F^{(k)}(N,\alpha)$,
each class contains more than $\alpha{|X| \choose k}$ $k$-tuples
of $X$. Note that $F^{(k)}(N,0)$ is essentially the inverse
function of the usual Ramsey function $r_k(n,n)$. It is easy to
show that for $0 \leq \alpha <1/2$, $$c(\alpha)\log N <
F^{(2)}(N,\alpha)<c'(\alpha)\log N.$$ As Erd\H{o}s points out, for
$k \geq 3$ the function $F^{(k)}(N,\alpha)$ is not well
understood. If $\alpha=1/2-\epsilon$ for sufficiently small
$\epsilon>0$, then the result of Erd\H{o}s and Hajnal from the
previous paragraph (for general $k$) demonstrates
$$c_k(\epsilon)\left(\log N
\right)^{1/(k-1)}<F^{(k)}(N,\alpha)<c'_k(\epsilon)\left(\log N
\right)^{1/(k-1).}$$ On the other hand, since $F^{(k)}(N,0)$ is
the inverse function of $r_k(n,n)$, then the old conjecture of
Erd\H{o}s, Hajnal, and Rado would imply that $$c_1\log_{(k-1)}
N<F^{(k)}(N,0)<c_2\log_{(k-1)} N,$$ where we recall that
$\log_{(t)} N$ denotes the $t$ times iterated logarithm function.
Assuming the conjecture, as $\alpha$ increases from $0$ to $1/2$,
$F^{(k)}(N,\alpha)$ increases from $\log_{(k-1)} n$ to $(\log
N)^{(1/(k-1)}$. Erd\H{o}s \cite{CG98} asked (and offered a \$500
cash reward) if the change in $F^{(k)}(N,\alpha)$ occurs
continuously, or there are jumps? He suspected the only jump
occurs at $\alpha=0$. If $\alpha$ is bounded away from $0$,
Theorem \ref{highdensity} demonstrates that $F^{(k)}(N,\alpha)$
already
 grows as some power of $\log N$. That is, for each $\alpha>0$ and
$k$ there are $c,\epsilon>0$ such that $F^{(k)}(N,\alpha)>c(\log
N)^{\epsilon}$.

We will deduce Theorem \ref{highdensity} from a result about the
$r$-color Ramsey number of a certain $k$-uniform hypergraph with
$n$ vertices and edge density almost one. The Ramsey number
$r(H;r)$ of a $k$-uniform hypergraph $H$ is the minimum $N$ such
that every $r$-edge-coloring of the $k$-tuples of a $N$-element
set contains a monochromatic copy of $H$. The blow-up
$K^{(k)}_{\ell}(n)$ is the $k$-uniform hypergraph whose vertex set
consists of $\ell$ parts of size $n$ and whose edges are all
$k$-tuples that have their vertices in some $k$ different parts.
Note that $K^{(k)}_{\ell}(n)$ has $\ell n$ vertices and ${\ell
\choose k}n^k \geq \left(1-{k \choose 2}/\ell\right){ln \choose
k}$ edges. In particular, as $\ell$ grows with $k$ fixed, the edge
density of $K^{(k)}_{\ell}(n)$ goes to $1$. Therefore, Theorem
\ref{highdensity} is a corollary of the following result.
\begin{theorem}\label{thmF}
For all positive integers $r,k,\ell$, there is a constant
$c=c(r,k,\ell)$ such that $$r\big(K^{(k)}_{\ell}(n);r\big) \leq
e^{c n^{\ell}}.$$
\end{theorem}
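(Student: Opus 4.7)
The plan is to deduce Theorem~\ref{thmF} from a stronger density statement about $\ell$-partite $k$-uniform hypergraphs, using the Erd\H{o}s box theorem (the $k$-uniform Kov\'ari--S\'os--Tur\'an) as the key combinatorial tool. The Erd\H{o}s box theorem asserts $\mathrm{ex}(N, K^{(k)}_{n,\ldots,n}) = O(N^{k - 1/n^{k-1}})$, which already handles the case $\ell = k$ of Theorem~\ref{thmF}: in any $r$-coloring of $\binom{[N]}{k}$, some color class exceeds this Tur\'an threshold once $N \geq (cr)^{n^{k-1}}$, so $r(K^{(k)}_k(n); r) \leq e^{C n^{k-1}} \leq e^{C n^k}$.

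For general $\ell \geq k$ I would prove the following density lemma: any $\ell$-partite $k$-uniform hypergraph on parts $P_1, \ldots, P_\ell$ of size $M$, in which at least a $\delta$-fraction of the $\binom{\ell}{k} M^k$ crossing $k$-tuples are edges, contains a copy of $K^{(k)}_\ell(n)$ with one $n$-subset in each $P_i$, provided $M \geq (a/\delta)^{b \, n^{\ell-1}}$ for constants $a, b$ depending only on $k$ and $\ell$. Theorem~\ref{thmF} then follows by partitioning $[N]$ uniformly at random into $\ell$ equal parts: by pigeonhole over the $r$ colors and averaging over partitions, some color class forms an $\ell$-partite hypergraph of density $\geq 1/(2r)$ on parts of size $N/\ell$, so the density lemma applied with $\delta = 1/(2r)$ produces the desired monochromatic blow-up whenever $N/\ell \geq (2ar)^{b\,n^{\ell-1}}$, which holds for $N = e^{c n^\ell}$ with $c = c(r, k, \ell)$ chosen large enough.

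To prove the density lemma I would induct on $k + \ell$, with base cases $\ell = k$ (Erd\H{o}s box) and $k = 2$ (iterated Kov\'ari--S\'os--Tur\'an for multipartite graphs). For the inductive step, fix a vertex $v \in P_\ell$ and consider its link $\Lambda_v$, an $(\ell-1)$-partite $(k-1)$-uniform hypergraph on $P_1 \cup \cdots \cup P_{\ell-1}$; a standard averaging argument shows at least $M/2$ of the $v$'s have links of density $\geq \delta/2$, and the inductive form of the density lemma with parameters $(k-1, \ell-1)$ guarantees that each such link already contains a copy of $K^{(k-1)}_{\ell-1}(n)$ on suitable $n$-subsets $V_1(v), \ldots, V_{\ell-1}(v)$. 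The step I expect to be the main obstacle is a dependent random choice argument to find $n$ vertices $v_1, \ldots, v_n \in P_\ell$ that share a common realization $V_1, \ldots, V_{\ell-1}$ of this $K^{(k-1)}_{\ell-1}(n)$; once that is in hand, setting $V_\ell = \{v_1, \ldots, v_n\}$ yields the desired monochromatic $K^{(k)}_\ell(n)$. This dependent random choice step is exactly what forces the exponent $n^{\ell-1}$ in the required size of $M$, and via the random partition it becomes the $n^\ell$ appearing in Theorem~\ref{thmF}.
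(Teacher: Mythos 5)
There is a genuine gap: the ``density lemma'' on which your whole reduction rests is false whenever $\ell > k$. Positive edge density cannot force a copy of $K^{(k)}_{\ell}(n)$, because such a blow-up contains a copy of $K^{(k)}_{\ell}$ itself, and a $k$-uniform hypergraph of density $1/r$ need not contain any clique on $\ell > k$ vertices. Concretely, take $k=2$, $\ell=3$, $r=2$, and color a pair red if it crosses a fixed bipartition of $[N]$ into two halves, blue otherwise. The red class has density about $1/2$, and its restriction to any tripartition of $[N]$ has crossing density bounded below by a constant, yet it is triangle-free, so it contains no $K^{(2)}_{3}(n)$ for any $n\geq 1$. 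The same example kills the base case of your induction: iterated K\"ov\'ari--S\'os--Tur\'an in a dense $\ell$-partite graph only yields a complete bipartite graph between some two parts, never a complete multipartite structure spanning $\ell>k$ parts. (Your case $\ell=k$, via the Erd\H{o}s box theorem applied to the densest color class, is correct --- but that is exactly the case where density does force the target.) The pigeonhole step compounds the problem: even if some color class is dense, nothing guarantees that \emph{that} color contains even one copy of $K^{(k)}_\ell$, let alone $n$ vertices in each part.

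The missing idea, which is how the paper proceeds, is to first use Ramsey's theorem at the constant scale to manufacture cliques before any density argument. Let $t=r(K^{(k)}_{\ell};r)$, a constant depending only on $r,k,\ell$. Every $t$-subset of $[N]$ contains a monochromatic $\ell$-set, so a double count gives at least $\binom{t}{\ell}^{-1}\binom{N}{\ell}$ monochromatic $\ell$-sets, and by pigeonhole some color $i$ accounts for at least a $\frac{1}{r}$-fraction of these. One then forms the auxiliary $\ell$-uniform hypergraph $G$ on $[N]$ whose edges are the monochromatic $\ell$-sets in color $i$; this $G$ has density bounded below by a constant depending only on $r,k,\ell$. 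Now the extremal lemma you invoke (the box theorem, in its $\ell$-uniform form) is applied to $G$, not to a color class: it yields a complete $\ell$-partite $\ell$-uniform subhypergraph with parts $V_1,\dots,V_\ell$ of size $n$ once $N\geq e^{cn^{\ell-1}}$. Since every transversal choosing one vertex from each $V_j$ is a monochromatic $\ell$-set in color $i$, every $k$-tuple meeting $k$ distinct parts extends to such a transversal and hence has color $i$, so $V_1,\dots,V_\ell$ span a monochromatic $K^{(k)}_{\ell}(n)$. In short, your supersaturation/box-theorem instinct is the right second half of the argument, but it must be run on the hypergraph of monochromatic $\ell$-cliques rather than on a single dense color class; without the preliminary Ramsey step your link/dependent-random-choice induction has nothing true to induct on.
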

\begin{proof}
Consider an $r$-coloring of ${[N] \choose k}$ with $N
=e^{cn^{\ell-1}}$ and $c=\left(2r\cdot {t \choose
\ell}\right)^{\ell-1}$, where $t$ is the $r$-color Ramsey number
$r(K^{(k)}_{\ell};r)$. The proof uses a simple trick which appears
in \cite{E7172} and (see also \cite{KR06}). By definition, every
vertex subset of size $t$ contains a monochromatic set of size
$\ell$. Since each monochromatic set of size $\ell$ is contained
in ${N-\ell \choose t-\ell}$ subsets of size $t$, the number of
monochromatic sets of size $\ell$ is at least $${N \choose
t}/{N-\ell \choose t-\ell}={t \choose \ell}^{-1}{N \choose
\ell}.$$ By the pigeonhole principle, there is a color $1 \leq i
\leq r$ for which there are at least $\frac{1}{r}{t \choose
\ell}^{-1}{N \choose \ell}$ monochromatic sets of size $\ell$ in
color $i$. Define the $\ell$-uniform hypergraph $G$ with vertex
set $[N]$ whose edges consist of the monochromatic sets of size
$\ell$ in color $i$ in our $r$-coloring. We have just shown that
hypergraph $G$ with $N$ vertices has at least $\frac{1}{r}{t
\choose \ell}^{-1}{N \choose \ell} \geq
\epsilon\frac{N^{\ell}}{\ell!}$ edges with
$\epsilon=\frac{1}{2r}{t \choose \ell}^{-1}$. A standard extremal
lemma for hypergraphs (see, e.g., \cite{E64}, \cite{N08})
demonstrates that any $\ell$-uniform hypergraph with $N$ vertices
and at least $\epsilon\frac{N^{\ell}}{\ell!}$ edges with $(\ln
N)^{-1/(\ell-1)} \leq \epsilon \leq \ell^{-3}$ contains a complete
$\ell$-uniform $\ell$-partite hypergraph with parts of size
$\lfloor \epsilon (\ln N)^{1/(\ell-1)}\rfloor$. (An $l$-uniform
hypergraph is {\it $l$-partite} if there is a partition of the
vertex set into $l$ parts such that each edge has exactly one
vertex in each part.) In particular, $G$ contains a complete
$\ell$-uniform $\ell$-partite hypergraph with parts of size
$\lfloor \epsilon (\ln N)^{1/(\ell-1)}\rfloor = n$, where we use
that $\epsilon=c^{-1/(\ell-1)}$. The vertices of this complete
$\ell$-uniform $\ell$-partite hypergraph with $n$ vertices in each
part in $G$ are the vertices of a monochromatic
$K^{(k)}_{\ell}(n)$ in color $i$, completing the proof.
\end{proof}

Finally we want to mention another problem of Erd\H{o}s related to
the growth of Ramsey numbers of complete $3$-uniform hypergraphs.
Erd\H{o}s \cite{E71} (see also \cite{E90} and \cite{CG98}) asked
the following problem.

\begin{question} Suppose $|S|=N$ and the triples from $S$ are split into two
classes. Does there exist a pair of subsets $A,B \subset S$ with
$|A|=|B| \geq c(\log N)^{1/2}$ such that all triples from $A \cup
B$ that hit both $A$ and $B$ are in the same class? \end{question}
Erd\H{o}s showed that the answer is yes under the weaker
assumption that only the triples with two vertices in $A$ and one
vertex in $B$ must be monochromatic. Although this question is
still open we would like to mention that the answer to it is no if
the triples of $S$ are split into four classes instead of two.
Indeed, in \cite{CFS08}, we found a $3$-uniform hypergraph $C_n$
on $n$ vertices which is much sparser than the complete hypergraph
$K_n^{(3)}$ and whose four-color Ramsey number satisfies
$r(C_n;4)>2^{2^{c_1n}}$. Let $V = \{v_1, \cdots, v_n\}$ be a set
of vertices and let $C_n$ be the $3$-uniform hypergraph on $V$
whose edge set is given by $\{v_i, v_{i+1}, v_j\}$ for all $1 \leq
i, j \leq n$. (Note that when $i = n$, we consider $i+1$ to be
equal to 1.) When $n$ is even, the vertices of $C_n$ can be
partitioned into two subsets $A$ and $B$ (with $v_i \in A$ if and
only if $i$ is even) of size $n/2$ such that all edges of $C_n$
hit both $A$ and $B$. Thus, a four-coloring of the triples of
$[N]$ with $N=2^{2^{c_1n}}$ and with no monochromatic copy of
$C_n$ also does not contain a pair $A,B \subset [N]$ with
$|A|=|B|=\frac{1}{2c_1}\log \log N$ such that all triples that hit
both $A$ and $B$ are in the same class.

\vspace{0.1cm} \noindent {\bf Acknowledgments.}\, The results in
Section 6.1 were obtained in collaboration with Noga Alon, and we
thank him for allowing us to include them here. We also thank N.
Alon and D. Mubayi for interesting discussions.

\end{document}